\documentclass[10pt]{article}
\usepackage{amsmath,amssymb,amsthm}
\usepackage[margin=1.1in]{geometry}
\usepackage{booktabs}
\usepackage[colorlinks=true,linkcolor=blue,citecolor=blue,urlcolor=blue]{hyperref}
\usepackage{xurl}
\hypersetup{pdftitle={An intermediate conjecture between Goldbach and Dubner},
            pdfauthor={Tushar Pandey}}

\newtheorem{theorem}{Theorem}
\newtheorem{proposition}{Proposition}
\newtheorem{lemma}{Lemma}
\newtheorem{conjecture}{Conjecture}
\newtheorem{corollary}{Corollary}
\theoremstyle{definition}
\newtheorem{hypothesis}{Hypothesis}
\theoremstyle{remark}
\newtheorem{remark}{Remark}
\newtheorem{question}{Question}

\newcommand{\T}{\mathcal{T}}
\newcommand{\PP}{\mathcal{P}}

\title{An intermediate conjecture between Goldbach and Dubner:\\
every even number is the sum of a prime and a twin prime}
\author{Tushar Pandey\thanks{Texas A\&M University, College Station, TX,
USA. \texttt{tusharp@tamu.edu}}}
\date{\today}

\begin{document}
\maketitle
{\let\thefootnote\relax\footnotetext{2020 \emph{Mathematics Subject
Classification.} Primary 11P32; Secondary 11N36, 11Y11.
\emph{Key words and phrases.} Goldbach conjecture, twin primes,
Romanov's method, Selberg sieve.}}

\begin{abstract}
We study the statement that every even number $n \ge 6$ is the sum of a
prime and a member of a twin prime pair. It sits between the conjectures of
Goldbach and Dubner and implies both the Goldbach and the twin prime
conjectures. Our main result is conditional: if the number of twin primes up
to $z$ is at least $c\,z/\log^2 z$ for all large $z$, a lower bound of the
order predicted by Hardy and Littlewood, with nothing assumed about their
distribution, then a positive proportion of the even numbers are so
representable, with density at least an absolute multiple of $c$. The proof
is Romanov's method with a Selberg sieve, and loses no factor of
$\log\log$. Nothing in this direction can be unconditional, since
representability on a set of positive density already implies the twin prime
conjecture; we show further that a polylogarithmic bound on the least twin
summand would force a power-type lower bound on the number of twin primes.
We verify the statement exhaustively for all even numbers up to $10^{14}$,
where the least twin summand never exceeds $23{,}029$.
\end{abstract}

\section{Introduction}

The binary Goldbach conjecture (GC) asserts that every even $n \ge 4$ is a
sum of two primes; it is verified to $4 \cdot 10^{18}$
\cite{OeS}. Call a prime $t$ a \emph{twin member} if $t - 2$ or $t + 2$ is
prime, and write $\T$ for the set of twin members and
$\pi_\T(z) = \#\{t \in \T : t \le z\}$ for their counting function.
Dubner \cite{Dubner} proposed the strengthening that every even $n > 4208$ is
a sum of two twin members; the $33$ even numbers $6 \le n \le 4208$ without
such a representation form $11$ triples of consecutive even numbers
(OEIS \href{https://oeis.org/A007534}{A007534} \cite{OEIS}), a structure
explained in Section~\ref{sec:mod3}. A proof of Dubner's conjecture would
imply both GC and the twin prime conjecture (TPC).

We isolate the weakest statement in this family that retains the double
implication.

\begin{conjecture}[S]\label{conj:S}
Every even $n \ge 6$ admits a representation $n = p + t$ with $p$ prime
and $t \in \T$.
\end{conjecture}

Dubner's conjecture implies (S) once the finitely many even $n \le 4208$ are
checked to admit a prime $+$ twin representation (they do; see
Section~\ref{sec:comp}), and (S) implies GC (with $4 = 2 + 2$). Unlike GC,
however, (S) also forces the infinitude of twin primes. The observation is
elementary, but it is what makes (S) worth isolating.

\begin{proposition}\label{thm:hierarchy}
If (S) holds for all even $n$ in a set of positive lower density among the
even numbers, then $\T$ is infinite. In particular
$(S) \Rightarrow \mathrm{TPC}$, and (S) implies the conjunction
$\mathrm{GC} \wedge \mathrm{TPC}$.
\end{proposition}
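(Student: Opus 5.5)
The plan is to argue by contraposition. Suppose $\T$ is finite, say $\T \subseteq [1, T_0]$ with $|\T| = k$. Every even $n$ for which (S) holds can then be written as $n = p + t$ with $t \in \T$, so $p = n - t$ is a prime in the short window $[n - T_0, n]$. We will count the even $n \le x$ of this form and show that there are $o(x)$ of them. That contradicts positive lower density, since positive lower density means at least $\delta x/2$ such $n \le x$ for some $\delta>0$ and all large $x$.

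The counting is a union bound. For each fixed $t \in \T$, the map $p \mapsto p + t$ is injective, so the number of $n \le x$ of the form $p + t$ is at most $\pi(x) = O(x/\log x)$. Summing over the $k$ elements of $\T$, the number of representable even $n \le x$ is at most $k\,\pi(x) = O_k(x/\log x) = o(x)$. This contradicts representability on a set of positive lower density, so $\T$ is infinite. Only Chebyshev's bound $\pi(x) \ll x/\log x$ is needed; in fact $\pi(x) = o(x)$ suffices.

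For the second part, (S) holds for all even $n \ge 6$, a set of density one among the evens, so the first part gives that $\T$ is infinite. Each twin member $t$ lies in the twin pair $\{t-2,t\}$ or $\{t,t+2\}$, and each pair contains at most two members. So infinitely many twin members give infinitely many twin pairs, which is TPC. For GC, each even $n \ge 6$ equals $p + t$ with both $p$ and $t$ prime, and the remaining case is $4 = 2+2$. Hence (S) implies $\mathrm{GC} \wedge \mathrm{TPC}$.

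There is no real obstacle here. The only point needing care is converting "finitely many twin members" into the uniform count above, and injectivity of $p \mapsto p+t$ for each fixed $t$ handles that.
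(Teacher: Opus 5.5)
Your proof is correct and follows the paper's: assume $|\T| = k < \infty$, bound the number of representable even $n \le x$ by $k\,\pi(x) = o(x)$ to contradict positive lower density, then deduce TPC and GC (using $4 = 2+2$). Your remark that infinitely many twin members yield infinitely many twin pairs fills in a step the paper leaves implicit.
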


\begin{proof}
Suppose $|\T| = k < \infty$. The number of even $n \le x$ representable as
$t + p$ with $t \in \T$, $p$ prime, is at most
$\sum_{t \in \T} \pi(x) = k\,\pi(x) = O(x/\log x) = o(x)$:
a vanishing fraction of the $\sim x/2$ even numbers up to $x$. Hence (S)
fails on a set of relative density $1$, contradicting the hypothesis.
\end{proof}

Statement (S) itself is not new. Sahu \cite{Sahu} stated it in 2024, as the
hypothesis that among the Goldbach partitions of an even number one can
always find a summand belonging to a twin prime pair, and verified it
computationally to $2.5 \times 10^{5}$. A preprint of Naser and Ahmed
\cite{NA} also states essentially (S), obtaining it as a consequence of
Bertrand's postulate, GC and Chen's theorem; a complete derivation along
those lines would in particular establish
$\mathrm{GC} \Rightarrow \mathrm{TPC}$ by Proposition~\ref{thm:hierarchy},
far beyond current techniques. We regard (S) as open.

Proposition~\ref{thm:hierarchy} also fixes what can and cannot be proved
about (S): any theorem placing even a positive proportion of the even numbers
in $\PP + \T$ carries TPC with it, so it must be conditional. The main result,
Theorem~\ref{thm:romanov} of Section~\ref{sec:romanov}, says that the
hypothesis such a theorem needs is purely a \emph{counting} one, a lower bound
$\pi_\T(z) \gg z/\log^2 z$ of the order predicted by Hardy--Littlewood.
Sections~\ref{sec:hard} to~\ref{sec:comp} then give the counterpoint that a
polylogarithmic bound on the least twin witness would force a power-of-$z$
lower bound for $\pi_\T(z)$, a rigidity property of the twin members occurring
in Goldbach partitions, a heuristic model for the least twin witness, and the
exhaustive verification of (S) for all even $n \le 10^{14}$.

\begin{remark}
The converse is unclear: $\mathrm{GC} \wedge \mathrm{TPC}$ does not obviously
imply (S). On the Hardy--Littlewood prediction
$|\T \cap [1,x]| \asymp x/\log^2 x$, the counting surplus
$|\T \cap [1,x]|\cdot|\PP \cap [1,x]| \asymp x^2/\log^3 x$ leaves a factor
$x/\log^3 x$ of room for $\PP + \T$, against $x/\log^2 x$ for GC
($\PP + \PP$) and $x/\log^4 x$ for Dubner ($\T + \T$).
\end{remark}

\section{A Romanov-type theorem under a twin-counting hypothesis}
\label{sec:romanov}

By Proposition~\ref{thm:hierarchy}, even the assertion that $\PP + \T$
contains a positive proportion of the even numbers implies the twin prime
conjecture, so nothing in this direction can be unconditional. We prove that a
\emph{purely quantitative} hypothesis on the size of $\T$, with no hypothesis
whatever on its distribution in progressions or elsewhere, already yields a
positive proportion. The argument is Romanov's \cite{Romanov}: a first moment,
a second moment controlled by a sieve, and Cauchy--Schwarz.

The letters $p$, $q$, $\ell$ are reserved for primes and $\nu(e)$ is the
number of distinct prime factors of $e$. Since $0$ and $4$ are not prime,
$2 \notin \T$: \emph{every element of $\T$ is an odd prime.} Write
\[
  \PP^{*} = \PP \setminus \{2\}, \qquad
  \pi^{*}(z) = \#\{p \le z : p > 2\} = \pi(z) - 1 \quad (z \ge 2).
\]

For an even integer $k \ne 0$ put
\begin{equation}\label{eq:S2def}
  \mathfrak{S}_2(k) \;=\; \prod_{\substack{\ell \,\mid\, k \\ \ell > 2}}
      \frac{\ell - 1}{\ell - 2},
\end{equation}
the arithmetic factor of the Hardy--Littlewood singular series for the pair
$(n, n+k)$. \emph{This is not the $\mathfrak{S}(n)$ of
Section~\ref{sec:heur}}, which is the singular series of a triple; the two are
unrelated, and we keep the subscript $2$ throughout to mark the difference.
Define further, for $d \ge 1$,
\begin{equation}\label{eq:ghdef}
  g(d) =
  \begin{cases}
    \displaystyle\prod_{\ell \mid d} \frac{1}{\ell - 2}
      & \text{if $d$ is odd and squarefree,}\\[2ex]
    0 & \text{otherwise,}
  \end{cases}
  \qquad
  h(d) = \prod_{\substack{\ell \mid d \\ \ell > 2}} \frac{\ell}{\ell - 2}.
\end{equation}
Both $g$ and $h$ are multiplicative and nonnegative, $g(1) = h(1) = 1$, and
$g(\ell) = 1/(\ell-2)$, $h(\ell) = \ell/(\ell-2)$ at odd primes $\ell$. Since
$\frac{\ell-1}{\ell-2} = 1 + \frac{1}{\ell-2}$, expanding the product
\eqref{eq:S2def} gives the identity by which we shall detect the singular
series through divisors:
\begin{equation}\label{eq:divisorexpansion}
  \mathfrak{S}_2(k) \;=\; \sum_{d \mid k} g(d)
  \qquad (k \ne 0 \text{ even}).
\end{equation}
Indeed the divisors $d$ of $k$ with $g(d) \ne 0$ are exactly the odd
squarefree ones, i.e.\ the products of distinct odd primes dividing $k$.

\begin{hypothesis}[$\mathrm{TW}(c)$]\label{hyp:TW}
There are constants $c > 0$ and $z_0 \ge 3$ such that
\[
  \pi_\T(z) \;\ge\; \frac{c\,z}{\log^2 z}
  \qquad\text{for every } z \ge z_0 .
\]
\end{hypothesis}

Hardy--Littlewood \cite{HL} predict $\pi_\T(z) \sim 4C_2\,z/\log^2 z$, with
$C_2 = 0.6601\ldots$ the twin-prime constant of Section~\ref{sec:heur}, so
$\mathrm{TW}(c)$ should hold for every $c < 4C_2 = 2.6406\ldots$; and by
Lemma~\ref{lem:sieve} below (taken with $d = 1$, $k = 2$) it \emph{fails} for
every $c$ exceeding an absolute constant. We stress that $\mathrm{TW}(c)$ is
far stronger than the twin prime conjecture: no lower bound of any kind on
$\pi_\T(z)$, not even $\pi_\T(z) \to \infty$, is known unconditionally.

\begin{theorem}\label{thm:romanov}
Assume $\mathrm{TW}(c)$, and set
\[
  \mathcal{R} \;=\; \{\, n \text{ even} : n = p + t
    \text{ for some } p \in \PP,\ t \in \T \,\}
  \;=\; (\PP + \T) \cap 2\mathbb{Z}.
\]
Then there is an absolute constant $K$ (independent of $c$, of $z_0$ and of
$\T$) and a threshold $x_0$ depending only on $z_0$, such that
\[
  \#\bigl(\mathcal{R} \cap [1, y]\bigr) \;\ge\; \frac{c}{K}\cdot\frac{y}{2}
  \qquad\text{for all } y \ge 2x_0 .
\]
Consequently $\mathcal{R}$ has lower asymptotic density at least $c/K$ inside
the even integers:
\[
  \liminf_{y \to \infty}
  \frac{\#\{n \le y : n \text{ even},\ n \in \PP + \T\}}{y/2}
  \;\ge\; \frac{c}{K} \;>\; 0 .
\]
\end{theorem}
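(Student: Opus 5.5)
We follow Romanov's scheme. Fix a large $y$ and put $x = y/2$. For even $n \le y$ let
\[
  r(n) = \#\{(p,t) : p \in \PP^{*},\ t \in \T,\ p \le x,\ t \le x,\ p + t = n\}.
\]
Since every $t \in \T$ is odd, $p + t$ is even whenever $p$ is odd, and every pair counted has $p+t \le y$. By Cauchy--Schwarz,
\[
  \#(\mathcal{R}\cap[1,y]) \;\ge\; \#\{n \le y: r(n) > 0\} \;\ge\; \frac{\bigl(\sum_n r(n)\bigr)^2}{\sum_n r(n)^2}.
\]
The first moment is simply $\pi^{*}(x)\,\pi_\T(x)$. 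Under $\mathrm{TW}(c)$, once $x \ge z_0$, and using Chebyshev $\pi^*(x) \gg x/\log x$, this is at least $c_1 c\, x^2/\log^3 x$ with $c_1$ absolute. This is where the threshold $x_0$ depending only on $z_0$ enters: we need $x \ge z_0$, plus an absolute threshold for Chebyshev.

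For the second moment, note that $\sum_n r(n)^2$ counts quadruples $p + t = p' + t'$. The diagonal $t = t'$ contributes exactly $\sum_n r(n)$, which is $\le x^2/\log^3 x$ times a constant (using the trivial bound $\pi_\T \le \pi$ and Chebyshev). In fact this diagonal is $O(x^2/\log^2x)$ only trivially, so the better route is to organise by the difference $k = t - t' \neq 0$, which is even. For each pair $t \ne t'$ in $\T\cap[1,x]$, the number of primes $p\le x$ with $p + k$ also prime (namely $p' = p + k$) is bounded by the upper-bound sieve Lemma~\ref{lem:sieve}, which we assume in the form
\[
  \#\{p \le x : p + k \in \PP\} \ll \mathfrak{S}_2(k)\,\frac{x}{\log^2 x}
\]
uniformly in even $k \ne 0$ with $|k|\le x$, via Selberg's sieve, with an absolute implied constant. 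The diagonal $k=0$ gives $\pi^*(x)\pi_\T(x)\le \pi(x)^2 \ll x^2/\log^2 x$. That is a problem, so instead we bound it as $\pi^*(x)\pi_\T(x)$ and compare later. Hence
\[
  \sum_n r(n)^2 \;\le\; \pi^*(x)\pi_\T(x) + O\Bigl(\frac{x}{\log^2 x}\sum_{\substack{t\ne t' \in \T\\ t,t'\le x}} \mathfrak{S}_2(t-t')\Bigr).
\]

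The key step, and the one that avoids any $\log\log$ loss, is to bound the double sum of $\mathfrak{S}_2$ using only $|\T\cap[1,x]|\le \pi_\T(x)$ together with sieve information on $\T$. Via \eqref{eq:divisorexpansion},
\[
  \sum_{t\ne t'}\mathfrak{S}_2(t-t') = \sum_{d \le 2x} g(d)\,\#\{t\ne t': d\mid t-t'\}.
\]
Crude bounds such as $\#\{\cdot\}\le \pi_\T(x)(x/d+1)$ diverge logarithmically, because $\sum g(d)/d$ over $d\le x$ is of size $\log x$. This is the main obstacle. I would instead bound the congruence count by the sieve as well: the number of $t \le x$ in a fixed residue class mod $d$ that are twin members is $\ll h(d)\,x/(\varphi(d)\log^2 x)$ for $d \le x^{1/2}$, by Lemma~\ref{lem:sieve} applied to $t$ and $t\pm2$. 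Then $\#\{t\ne t' : d\mid t-t'\}\ll \pi_\T(x)\,h(d)\,x/(d\log^2 x)$, roughly. Since $\sum_d g(d)h(d)/d$ converges (the terms are $\ll 1/\ell^2$ at primes), the contribution of $d\le x^{1/2}$ is $\ll \pi_\T(x)\,x/\log^2x$.

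For $d> x^{1/2}$, the bound $\#\{\cdot\}\le \pi_\T(x)(x/d+1)$ combined with $\sum_{x^{1/2}<d\le 2x} g(d)/d \ll 1$ (the logarithmic range $\log(2x)/\log x^{1/2}$ is bounded) gives $\ll\pi_\T(x)\, x$, and the term $+1$ gives $\sum_{d \le 2x} g(d)\ll x$. Multiplying by $x/\log^2x$ then yields a contribution of order $\pi_\T(x)x^2/\log^2x$, which is too big by a factor $\log x$ relative to the first moment's requirement. To avoid this I would redo the large-$d$ range by writing $d=\ell m$ with $\ell$ the largest prime factor, or by truncating the divisor expansion using a Selberg-weighted majorant of $\mathfrak{S}_2$ of level $x^{1/4}$. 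The standard fact is that $\mathfrak{S}_2(k)\ll\sum_{d\mid k,\,d\le D} g(d)$ up to an absolute factor when $|k|\le x$ and $D=x^{\epsilon}$, since large prime factors contribute a bounded product. Granting this, only $d\le x^{\epsilon}$ occurs, and the total is
\[
  \sum_n r(n)^2 \ll \pi^*(x)\pi_\T(x)+\frac{x^2}{\log^4x}\,\pi_\T(x)\ll \frac{x^2}{\log^3 x}.
\]
The first term is $\ll x\,\pi_\T(x)/\log x$ and, using the crude bound $\pi_\T(x) \ll x/\log^2 x$, is also of order $x^2/\log^3x$.

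Cauchy--Schwarz then gives $\#\ge (c_1c)^2 x^4\log^{-6}x / (C x^2\log^{-3}x)$, which is off by a $\log^3$. To get the correct result, the second moment must be bounded as $\ll \pi^*(x)\pi_\T(x)\cdot \pi_\T(x)\,\log x/x$ — that is, with $\pi_\T(x)^2$ rather than the upper bound $x^2/\log^4 x$. This works once we use $\pi_\T(x)\ge c x/\log^2x$ to dominate the diagonal: diag $\le \pi^*(x)\pi_\T(x)\ll \pi_\T(x)\, x/\log x \le c^{-1}\pi_\T(x)^2\log x$, and off-diagonal $\ll (x/\log^2x)\cdot \pi_\T(x)\,x/\log^2x \le c^{-1}\pi_\T(x)^2\, x/\log^2 x$ (with the $h(d)/\varphi(d)$ step organised so that one factor stays $\pi_\T(x)$). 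Then the ratio becomes
\[
  \frac{\pi^*(x)^2\pi_\T(x)^2}{c^{-1}\pi_\T(x)^2 x/\log^2x}\;\gg\; c\,x,
\]
linear in $c$ as claimed, with $K$ absolute. The delicate point remains the uniform divisor truncation of $\mathfrak{S}_2$ together with the sieve count of $\T$ in progressions. This is where I expect the real work to lie.
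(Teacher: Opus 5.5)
Your overall scheme matches the paper's. The shared steps are the first moment $\pi^{*}(x)\pi_\T(x)$, the off-diagonal quadruples bounded pair by pair through \eqref{eq:twinbound}, the divisor expansion \eqref{eq:divisorexpansion}, Lemma~\ref{lem:sieve} in progressions for $d\le x^{1/2}$ giving the convergent sum $\sum_d g(d)h(d)/d$, and Cauchy--Schwarz. Your final bookkeeping (second moment $\ll c^{-1}\pi_\T(x)^2x/\log^2x$, hence $\gg cx$ with absolute $K$) is equivalent to the paper's.

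\textbf{The gap.} It lies in the crux, the bound $\Sigma(x)\ll\pi_\T(x)\,x/\log^2x$ of Lemma~\ref{lem:crux}. For $d>x^{1/2}$ you conclude that the trivial bound loses a power of $\log x$. You then replace the argument by a truncation $\mathfrak{S}_2(k)\ll\sum_{d\mid k,\,d\le x^{\epsilon}}g(d)$, which you explicitly grant without proof. That truncation is in fact true (Rankin's trick with exponent $1/2$ proves it), but you do not prove it. So as written the one non-routine step is missing, and it is not needed at all.

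\textbf{Why the diagnosis is wrong.} It rests on misjudging the size of $g$. For odd squarefree $d$, $g(d)=\prod_{\ell\mid d}(\ell-2)^{-1}=h(d)/d$, so $g$ has size about $1/d$, not about $1$. Consequently:
\begin{itemize}
\item $\sum_d g(d)/d=\prod_{\ell>2}\bigl(1+\frac{1}{\ell(\ell-2)}\bigr)$ converges. The crude bound fails for small $d$ only because $x/d$ lacks the factor $\log^{-2}x$ that the sieve supplies.
\item $\sum_{d\le x}g(d)\le\prod_{2<\ell\le x}\bigl(1+\frac{1}{\ell-2}\bigr)\ll\log x$, not $\ll x$.
\item The large-moduli tail is small rather than $\ll 1$:
\[
  \sum_{d>x^{1/2}}\frac{g(d)}{d}\;\le\;x^{-1/4}\sum_{d\ge1}\frac{g(d)}{\sqrt d}
  \;=\;x^{-1/4}\prod_{\ell>2}\Bigl(1+\frac{1}{(\ell-2)\sqrt\ell}\Bigr)\;\ll\;x^{-1/4}.
\]
\end{itemize}
So with $\max_a\pi_\T(x;d,a)\le x/d+1$, the large moduli contribute only $\ll\pi_\T(x)(x^{3/4}+\log x)$ to $\Sigma(x)$. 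This is negligible against $\pi_\T(x)\,x/\log^2x$, and it is exactly the paper's argument (Lemma~\ref{lem:sums}(i),(iii) and \eqref{eq:M2}).

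\textbf{A separate error.} Your intermediate claim $\sum_n r(n)^2\ll x^2/\log^3x$ is false: the off-diagonal term $\pi_\T(x)\,x^2/\log^4x$ can be of order $x^3/\log^6x$. That is why your first Cauchy--Schwarz line came out inconsistent. The cleanest fix is the paper's bookkeeping. Absorb the diagonal unconditionally, since $\pi^{*}(x)=o(x^2/\log^4x)$. Keep one factor $\pi_\T(x)$ to get $|\mathcal{N}_x|\gg\pi_\T(x)\log^2x$, and invoke $\mathrm{TW}(c)$ only at the very end.
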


The conclusion is \emph{positive density}, linear in $c$, with no $\log\log$
loss; where a $\log\log$ might have been expected is discussed in
Remark~\ref{rem:nologlog}.

The unconditional ingredients of the proof are Mertens' theorems, the
Chebyshev bounds for $\pi(x)$, the elementary estimate
$\sum_{e \le Y}\mu^2(e)6^{\nu(e)} \ll Y \log^5 Y$, and Selberg's $\Lambda^2$
upper-bound sieve in the raw form \eqref{eq:selberg} below, from which we
derive the two-dimensional main term ourselves in Lemma~\ref{lem:G}, so that
no dimension-specific corollary has to be taken on trust.

Let $\mathcal{A}$ be a finite sequence of integers, $\mathcal{P}$ a set of
primes, and for $w \ge 2$ write $P(w) = \prod_{\ell \in \mathcal{P},\, \ell < w}\ell$
and
\[
  S(\mathcal{A}, \mathcal{P}, w)
    \;=\; \#\{\, a \in \mathcal{A} : \gcd(a, P(w)) = 1 \,\}.
\]
Suppose there are $X > 0$ and a multiplicative function $\omega$ with
$0 \le \omega(\ell) < \ell$ for $\ell \in \mathcal{P}$, such that for every
squarefree $e \mid P(w)$,
\[
  |\mathcal{A}_e| \;=\; \frac{\omega(e)}{e}\,X + r_e,
  \qquad \mathcal{A}_e = \{a \in \mathcal{A} : e \mid a\}.
\]
Then Selberg's $\Lambda^2$ sieve states that for every $\mathcal{D} \ge 1$,
\begin{equation}\label{eq:selberg}
  S(\mathcal{A}, \mathcal{P}, w)
  \;\le\; \frac{X}{G(\mathcal{D}, w)}
    \;+\; \sum_{\substack{e < \mathcal{D}^{2},\ e \mid P(w) \\ \mu^{2}(e) = 1}}
      3^{\nu(e)}\,|r_e| ,
  \qquad
  G(\mathcal{D}, w) = \sum_{\substack{e < \mathcal{D},\ e \mid P(w) \\ \mu^{2}(e) = 1}}
    \ \prod_{\ell \mid e} \frac{\omega(\ell)}{\ell - \omega(\ell)} .
\end{equation}
(See Halberstam and Richert \cite[Ch.~3]{HR}. The remainder in
\eqref{eq:selberg} is stated free of Selberg's weights $\lambda_e$, and this
uses the standard bound $|\lambda_e| \le 1$ for all $e < \mathcal{D}$
\cite[Ch.~3]{HR}: with it, the exact remainder
$\sum_{e_1, e_2 < \mathcal{D}}\lambda_{e_1}\lambda_{e_2} r_{[e_1,e_2]}$ is at
most $\sum_{e_1, e_2 < \mathcal{D}} |r_{[e_1,e_2]}|
\le \sum_{e < \mathcal{D}^2} 3^{\nu(e)}|r_e|$ in absolute value, each
squarefree $e < \mathcal{D}^2$ arising as $[e_1,e_2]$ for at most $3^{\nu(e)}$
pairs.)

The only thing we need beyond \eqref{eq:selberg} is a lower bound for
$G(\mathcal{D}, w)$. We prove it, in the generality and with the uniformity we
require, rather than quoting it.

\begin{lemma}\label{lem:G}
With the notation above, suppose that for some constants $A_1 \ge 1$ and
$A_2 \ge 0$,
\begin{align}
  (\Omega_1)\quad & 0 \;\le\; \frac{\omega(\ell)}{\ell}
    \;\le\; 1 - \frac{1}{A_1} \qquad (\ell \in \mathcal{P}),
    \label{eq:Om1}\\
  (\Omega_2)\quad & \sum_{\substack{\ell < v \\ \ell \in \mathcal{P}}}
    \frac{\omega(\ell)\log\ell}{\ell} \;\le\; 2\log v + A_2
    \qquad (v \ge 2).
    \label{eq:Om2}
\end{align}
Put $K_0 = 2A_1(2 + A_2)$. Then for every $w \ge e$ and
$\mathcal{D} \ge w^{K_0}$,
\[
  G(\mathcal{D}, w) \;\ge\; \frac{1}{2}\,V(w)^{-1},
  \qquad
  V(w) \;=\; \prod_{\substack{\ell < w \\ \ell \in \mathcal{P}}}
    \Bigl(1 - \frac{\omega(\ell)}{\ell}\Bigr).
\]
\end{lemma}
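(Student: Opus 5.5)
The plan is Rankin's trick applied to the complete sum. Write $f(\ell) = \omega(\ell)/(\ell-\omega(\ell))$ for $\ell \in \mathcal{P}$, extended multiplicatively to squarefree $e \mid P(w)$. The first step is the identity obtained by dropping the constraint $e < \mathcal{D}$:
\[
  \sum_{e \mid P(w)} \prod_{\ell \mid e} f(\ell) \;=\; \prod_{\substack{\ell < w\\ \ell\in\mathcal{P}}} \bigl(1+f(\ell)\bigr)
  \;=\; \prod_{\substack{\ell < w\\ \ell\in\mathcal{P}}} \frac{\ell}{\ell-\omega(\ell)} \;=\; V(w)^{-1}.
\]
This sum is finite, and its terms are nonnegative by $(\Omega_1)$. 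It therefore suffices to show that the tail $T = \sum_{e \mid P(w),\, e \ge \mathcal{D}} \prod_{\ell\mid e} f(\ell)$ satisfies $T \le \frac12 V(w)^{-1}$.

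For any $s>0$, inserting $(e/\mathcal{D})^s \ge 1$ into the tail gives
\[
  T \;\le\; \mathcal{D}^{-s}\prod_{\ell<w}\bigl(1+f(\ell)\ell^s\bigr).
\]
Dividing by $V(w)^{-1} = \prod(1+f(\ell))$, each factor becomes
\[
  \frac{1+f(\ell)\ell^s}{1+f(\ell)} \;=\; 1+\frac{\omega(\ell)}{\ell}\bigl(\ell^s-1\bigr),
\]
because $f/(1+f) = \omega(\ell)/\ell$. Using $1+u\le e^u$, we obtain
\[
  T\,V(w) \;\le\; \exp\Bigl(-s\log\mathcal{D} + \sum_{\ell<w,\,\ell\in\mathcal{P}} \frac{\omega(\ell)}{\ell}\bigl(\ell^s-1\bigr)\Bigr).
\]

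Next take $s = \lambda/\log w$ with a constant $\lambda>0$ to be fixed. For $\ell < w$ we have $s\log\ell \le \lambda$, so convexity of $u \mapsto e^u$ on $[0,\lambda]$ gives
\[
  \ell^s - 1 \;\le\; \frac{e^\lambda-1}{\lambda}\, s\log\ell .
\]
Combining this with $(\Omega_2)$ at $v = w$ bounds the sum by
\[
  (e^\lambda-1)\,\frac{2\log w + A_2}{\log w} \;\le\; (e^\lambda-1)(2+A_2),
\]
where the last step uses $w \ge e$. On the other side, $\mathcal{D}\ge w^{K_0}$ gives $s\log\mathcal{D} \ge \lambda K_0 = 2A_1\lambda(2+A_2) \ge 2\lambda(2+A_2)$, since $A_1 \ge 1$. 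Hence
\[
  T\,V(w) \;\le\; \exp\bigl((2+A_2)(e^\lambda - 1 - 2\lambda)\bigr).
\]

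The only delicate point is numerical: the exponent must be at most $-\log 2$, and cruder bounds for $\ell^s-1$ (for example $\le e^\lambda s\log \ell$) fail to reach $1/2$. The convexity bound is what makes it work. Choosing $\lambda=\log 2$ gives $e^\lambda-1-2\lambda = 1-2\log 2 \approx -0.386$, and since $2+A_2\ge 2$ the exponent is at most $-0.77 < -\log 2$. Therefore $T\le \frac12 V(w)^{-1}$, and
\[
  G(\mathcal{D},w) \;=\; V(w)^{-1} - T \;\ge\; \tfrac12 V(w)^{-1}.
\]
Note that $(\Omega_1)$ is used only to ensure $f(\ell)\ge 0$ is finite and that $K_0 \ge 2(2+A_2)$.
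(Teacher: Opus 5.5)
Your proof is correct, but its key estimate differs from the paper's.

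\textbf{The paper's route.} The paper uses no exponential weight. It bounds the tail by Markov's inequality in $\log e$:
\[
\sum_{e\ge\mathcal{D}}f(e)\le\frac{1}{K_0\log w}\sum_{e\mid P(w)}f(e)\log e .
\]
It evaluates the right side by writing $\log e=\sum_{\ell\mid e}\log\ell$ and grouping by $\ell$, bounding the inner sum $\sum_{\ell\nmid e'}f(e')$ crudely by $V(w)^{-1}$. This leaves $f(\ell)=\frac{\omega(\ell)}{\ell-\omega(\ell)}$, which $(\Omega_1)$ converts to $A_1\,\omega(\ell)/\ell$ before $(\Omega_2)$ is applied. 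The value $K_0=2A_1(2+A_2)$ is then exactly what makes the ratio $\frac12$.

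\textbf{Comparison.} The paper's argument is the $s\to0$ linearization of your Rankin bound. It needs no parameter, no chord estimate for $\ell^{s}-1$, and no numerical comparison such as $2(1-2\log 2)<-\log 2$. Its tail bound, however, decays only like $1/K_0$.

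Your version pays for the convexity step and a fairly tight numerical check, and buys two things.
\begin{enumerate}
\item After optimizing $\lambda$, your tail bound decays faster than exponentially in $K_0/(2+A_2)$. That matters if the $\frac12$ is to be replaced by $1-\varepsilon$ without taking $\mathcal{D}$ a very large power of $w$.
\item The exact normalization $\frac{1+f(\ell)\ell^{s}}{1+f(\ell)}=1+\frac{\omega(\ell)}{\ell}(\ell^{s}-1)$ shows that $K_0=2(2+A_2)$ already suffices, with no use of $A_1$ beyond $\omega(\ell)<\ell$.
\end{enumerate}
The paper's factor $A_1$ would disappear the same way if its inner sum were evaluated exactly as $V(w)^{-1}/(1+f(\ell))$ rather than bounded by $V(w)^{-1}$, since $f(\ell)/(1+f(\ell))=\omega(\ell)/\ell$.
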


\begin{proof}
Let $\mathcal{E}$ be the set of squarefree $e \mid P(w)$ and, for
$e \in \mathcal{E}$, put $f(e) = \prod_{\ell \mid e}
\frac{\omega(\ell)}{\ell-\omega(\ell)} \ge 0$; note $f$ is well defined by
$(\Omega_1)$, which gives $\omega(\ell) < \ell$. Since $f$ is multiplicative
and $\mathcal{E}$ is the set of divisors of $P(w)$,
\begin{equation}\label{eq:Gfull}
  \sum_{e \in \mathcal{E}} f(e)
  \;=\; \prod_{\substack{\ell < w \\ \ell \in \mathcal{P}}}
    \Bigl(1 + \frac{\omega(\ell)}{\ell - \omega(\ell)}\Bigr)
  \;=\; \prod_{\substack{\ell < w \\ \ell \in \mathcal{P}}}
    \Bigl(1 - \frac{\omega(\ell)}{\ell}\Bigr)^{-1}
  \;=\; V(w)^{-1}.
\end{equation}
Next, writing $\log e = \sum_{\ell \mid e}\log\ell$ and grouping by the prime
$\ell$,
\[
  \sum_{e \in \mathcal{E}} f(e)\log e
  \;=\; \sum_{\substack{\ell < w \\ \ell \in \mathcal{P}}} f(\ell)\log\ell
    \sum_{\substack{e' \in \mathcal{E} \\ \ell \nmid e'}} f(e')
  \;\le\; \Bigl(\sum_{\substack{\ell < w \\ \ell \in \mathcal{P}}}
      \frac{\omega(\ell)\log\ell}{\ell - \omega(\ell)}\Bigr) V(w)^{-1},
\]
where we used \eqref{eq:Gfull} and $f \ge 0$ to bound the inner sum by
$V(w)^{-1}$. By $(\Omega_1)$,
$\frac{\omega(\ell)}{\ell-\omega(\ell)}
 = \frac{\omega(\ell)}{\ell}\bigl(1-\frac{\omega(\ell)}{\ell}\bigr)^{-1}
 \le A_1 \frac{\omega(\ell)}{\ell}$, so by $(\Omega_2)$ the bracket is at most
$A_1(2\log w + A_2) \le A_1(2 + A_2)\log w$, the last step because
$w \ge e$ gives $\log w \ge 1$. Hence
\[
  \sum_{e \in \mathcal{E}} f(e)\log e
  \;\le\; A_1(2+A_2)(\log w)\,V(w)^{-1}
  \;=\; \tfrac12 K_0 (\log w)\, V(w)^{-1}.
\]
Now for $\mathcal{D} \ge w^{K_0}$, every $e \in \mathcal{E}$ with
$e \ge \mathcal{D}$ has $\log e \ge K_0\log w$, so
\[
  \sum_{\substack{e \in \mathcal{E} \\ e \ge \mathcal{D}}} f(e)
  \;\le\; \frac{1}{K_0 \log w}\sum_{e \in \mathcal{E}} f(e)\log e
  \;\le\; \tfrac12 V(w)^{-1}.
\]
Subtracting from \eqref{eq:Gfull} gives
$G(\mathcal{D},w) = \sum_{e \in \mathcal{E},\, e < \mathcal{D}} f(e)
\ge \tfrac12 V(w)^{-1}$.
\end{proof}

Note that only the \emph{upper} bound $(\Omega_2)$ was used. This is essential
below, the matching lower bound genuinely failing for our sieve problem.

\begin{lemma}[Prime pairs in an arithmetic progression]\label{lem:sieve}
There is an absolute constant $A$ with the following property. Let $z \ge 3$,
let $k \ge 2$ be an even integer, let $d \ge 1$ be odd and squarefree with
$d \le z^{1/2}$, and let $a$ be any residue class modulo $d$. Put
\[
  T(z; d, a, k) \;=\;
  \#\{\, t \le z : t \equiv a \!\!\pmod d,\ t \in \PP,\ t + k \in \PP \,\}.
\]
Then
\begin{equation}\label{eq:APbound}
  T(z; d, a, k)
  \;\le\; A\,\frac{z}{d}\cdot
    \frac{\mathfrak{S}_2(k)\, h(d)}{\log^{2}(z/d)} \;+\; 2 .
\end{equation}
Taking $d = 1$, where the term $2$ may be omitted: for all $x \ge 3$ and all
even $k \ge 2$,
\begin{equation}\label{eq:twinbound}
  \#\{\, p \le x : p \in \PP,\ p+k \in \PP \,\}
  \;\le\; A\,\mathfrak{S}_2(k)\,\frac{x}{\log^{2} x}.
\end{equation}
\end{lemma}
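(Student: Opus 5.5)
We will apply Selberg's sieve \eqref{eq:selberg} together with Lemma~\ref{lem:G}. Write $t = a' + d m$ with $m$ ranging over $0 \le m \le M := z/d$ (roughly), where $a'$ is the least nonnegative representative of $a$. Take the sieved sequence
\[
  \mathcal{A} = \{\, (a'+dm)(a'+dm+k) : 0 \le m \le M \,\},
  \qquad X = M + 1,
\]
and sieve by the primes $\mathcal{P} = \{\ell : \ell \nmid d\}$ up to $w = (z/d)^{1/(4K_0)}$, say. Any $t$ counted in $T(z;d,a,k)$ with $t \ge w$ survives the sieve, so
\[
  T \le S(\mathcal{A},\mathcal{P},w) + w .
\]
The term $w$ must be absorbed: either it is bounded by the main term, or, if $z/d$ is tiny, the whole count is trivially $\le z/d + 1$ and is absorbed since $\log(z/d) \ll 1$ there. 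This is where the additive $2$ will be needed (for instance when $\gcd(a,d) > 1$, or when $t$ or $t+k$ is a prime dividing $d$).

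For $\ell \nmid d$, $m \mapsto t$ is a bijection mod $\ell$, so $\omega(\ell)$ is the number of roots of $x(x+k) \equiv 0 \pmod \ell$: we have $\omega(2)=1$, $\omega(\ell) = 1$ if $\ell \mid k$, and $\omega(\ell) = 2$ otherwise. By CRT, $|r_e| \le \omega(e) \le 2^{\nu(e)}$.

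Next we check the hypotheses of Lemma~\ref{lem:G}. Condition $(\Omega_1)$ holds with $A_1 = 3$, since $\omega(\ell)/\ell \le 2/3$ for $\ell \ge 3$ and equals $1/2$ at $\ell = 2$. Condition $(\Omega_2)$ follows from Mertens, $\sum_{\ell<v}\log\ell/\ell = \log v + O(1)$, because $\omega(\ell) \le 2$; this is why only the upper bound is needed. With $\mathcal{D} = w^{K_0}$ we get $G \ge \tfrac12 V(w)^{-1}$, so the main term is at most $2X\,V(w)$. We evaluate
\[
  V(w) = \tfrac12 \prod_{\substack{2<\ell<w\\ \ell \nmid dk}} \Bigl(1-\frac{2}{\ell}\Bigr) \prod_{\substack{2<\ell<w \\ \ell\mid k,\ \ell\nmid d}}\Bigl(1-\frac1\ell\Bigr).
\]
Comparing with $\prod_{2<\ell<w}(1-2/\ell) \asymp \log^{-2} w$ (Mertens), the primes dividing $k$ contribute factors $\frac{1-1/\ell}{1-2/\ell} = \frac{\ell-1}{\ell-2}$, giving at most $\mathfrak{S}_2(k)$. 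The primes dividing $d$ contribute at most $(1-2/\ell)^{-1} = \ell/(\ell-2)$, giving $h(d)$; here dropping the $1-1/\ell$ factor for $\ell \mid (d,k)$ only helps. Since $\log w \asymp \log(z/d)$, the main term is $\ll (z/d)\,\mathfrak{S}_2(k)h(d)/\log^2(z/d)$.

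The remainder is at most $\sum_{e<\mathcal{D}^2}\mu^2(e)\,6^{\nu(e)} \ll \mathcal{D}^2\log^5\mathcal{D}$ by the stated elementary estimate. Since $\mathcal{D}^2 = (z/d)^{1/2}$, this is $O((z/d)^{0.6})$, which is negligible against $(z/d)/\log^2(z/d)$ because $\mathfrak{S}_2, h \ge 1$. For \eqref{eq:twinbound} with $d=1$ and $x \ge 3$, the small-$x$ range is absorbed into $A$, since the count is then $\ge 1$ only when the right side is bounded below.

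The main obstacle is uniformity: $K_0$, and hence $w$, must be absolute. This holds because $A_1$ and $A_2$ do not depend on $d$ or $k$, since $\omega \le 2$ everywhere. The remaining delicate point is handling small $z/d$ and the boundary primes via the $+2$ and by enlarging $A$.
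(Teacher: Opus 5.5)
Your proposal is correct and follows essentially the same route as the paper. Both use Selberg's sieve \eqref{eq:selberg} on $t(t+k)$ for $t\equiv a \pmod d$ with $\mathcal{P}=\{\ell:\ell\nmid d\}$, Lemma~\ref{lem:G} with $A_1=3$ and an absolute $A_2$, $w$ a fixed small power of $z/d$ with $\mathcal{D}=w^{K_0}$, the bound $T\le S+w$, the $6^{\nu(e)}$ remainder estimate, and the comparison $V(w)\le h(d)\,\mathfrak{S}_2(k)\prod_{2<\ell<w}(1-2/\ell)$ followed by Mertens. The differences are cosmetic: the paper uses the sharp Mertens inequality to get $A_2=0$, $K_0=12$ and $w=X^{1/60}$, and it splits off the case $\gcd(a(a+k),d)>1$; since sieving only by $\ell\nmid d$ never uses that coprimality, your argument in fact never needs the $+2$.
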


\begin{proof}
\emph{Degenerate case.} Suppose some prime $q \mid d$ divides $a(a+k)$. If
$t \equiv a \pmod d$ then $t(t+k) \equiv a(a+k) \equiv 0 \pmod q$, so $q \mid
t$ or $q \mid t+k$; as $t$ and $t+k$ are primes this forces $t = q$ or
$t = q-k$. Hence $T(z;d,a,k) \le 2$ and \eqref{eq:APbound} holds. For $d = 1$
this case cannot occur, which is why the term $2$ is absent from
\eqref{eq:twinbound}.

\emph{Main case: $\gcd(a(a+k), d) = 1$.} Set
\[
  X = \frac{z}{d}, \qquad
  I = \{\, 1 \le t \le z : t \equiv a \!\!\pmod d \,\}, \qquad
  \mathcal{A} = \bigl(\, t(t+k) \,\bigr)_{t \in I},
\]
a sequence of $|I| = X + O(1)$ integers, and let
$\mathcal{P} = \{\ell : \ell \nmid d\}$. For squarefree $e$ with all prime
factors in $\mathcal{P}$ we have $\gcd(e,d) = 1$, so by the Chinese remainder
theorem the conditions $t \equiv a \pmod d$ and $t(t+k) \equiv 0 \pmod e$
define exactly $\omega(e)$ classes modulo $de$, where $\omega$ is
multiplicative with
\[
  \omega(2) = 1, \qquad
  \omega(\ell) = 1 \ \ (\ell > 2,\ \ell \mid k), \qquad
  \omega(\ell) = 2 \ \ (\ell > 2,\ \ell \nmid k).
\]
(At $\ell = 2$ the two roots $t \equiv 0$, $t \equiv -k$ of $t(t+k) \equiv 0$
coincide because $k$ is even; at odd $\ell \mid k$ they coincide as well;
otherwise they are distinct.) Counting the integers of $[1,z]$ in each of
these $\omega(e)$ classes modulo $de$ gives
\[
  |\mathcal{A}_e| \;=\; \frac{\omega(e)}{e}\,X + r_e,
  \qquad |r_e| \;\le\; \omega(e) \;\le\; 2^{\nu(e)} .
\]

We verify $(\Omega_1)$ and $(\Omega_2)$ of Lemma~\ref{lem:G} with constants
\emph{independent of $z$, $d$, $k$ and $a$}. Since $\omega(2)/2 = 1/2$ and
$\omega(\ell)/\ell \le 2/\ell \le 2/3$ for $\ell \ge 3$, \eqref{eq:Om1} holds
with $A_1 = 3$. Since $\omega(\ell) \le 2$ for every $\ell$, \eqref{eq:Om2}
holds with $A_2 = 0$: indeed
\[
  \sum_{\substack{\ell < v \\ \ell \in \mathcal{P}}}
    \frac{\omega(\ell)\log\ell}{\ell}
  \;\le\; 2\sum_{\ell \le v}\frac{\log\ell}{\ell}
  \;\le\; 2\log v \qquad (v \ge 2),
\]
the last inequality being Mertens' theorem in its sharp form, with no additive
constant. The difference $\delta(v) := \sum_{\ell\le v}\frac{\log\ell}{\ell} -
\log v$ is negative throughout: it decreases on each interval between
consecutive primes, so its maxima occur at $v = \ell$ prime, and evaluating
there for every $\ell \le 2\cdot10^{7}$ gives $\max\delta = \delta(2) =
\tfrac12\log 2 - \log 2 = -0.34657\ldots$; thereafter $\delta(v) \to
-1.33258\ldots$, and the explicit form of Mertens' theorem due to Rosser and
Schoenfeld \cite{RS}, whose error term is $O(1/\log v)$ with a small explicit
constant, keeps $\delta(v) < 0$ for all $v \ge 2\cdot10^{7}$ with a wide
margin. Thus $K_0 = 2A_1(2+A_2) = 12$ is admissible in
Lemma~\ref{lem:G}. Observe that the lower half of the two-sided regularity
condition would fail here by $\asymp \sum_{\ell \mid k}\frac{\log \ell}{\ell}$,
which for the extremal $k$ (products of the first several primes) is
$\asymp \log\log k$, though for typical $k$ it is $O(1)$; we never use it.

Choose
\[
  w = X^{1/60}, \qquad \mathcal{D} = w^{12} = X^{1/5},
\]
and assume $X \ge C_0 := e^{60}$, which is exactly what makes $w \ge e$; since
$d \le z^{1/2}$ we have $X \ge z^{1/2}$, so this holds once
$z \ge C_0^{2} = e^{120}$. For $3 \le z < C_0^{2}$ the bound
\eqref{eq:APbound} is trivial for a modest absolute $A$. Indeed both sides
depend on $z$ and $d$ only through $u := z/d$: a residue class modulo $d$ meets
$[1,z]$ in at most $u + 1$ integers, so $T \le u + 1$, while
$\mathfrak{S}_2(k)h(d) \ge 1$ makes the first term on the right of
\eqref{eq:APbound} at least $A\,u/\log^{2}u$. So it suffices that
$u + 1 \le A\,u/\log^{2}u$, i.e.\ that
\[
  A \;\ge\; \Bigl(1 + \frac1u\Bigr)\log^{2}u
  \;=\; \log^{2}u + \frac{\log^{2}u}{u}
  \qquad \text{for all } \sqrt{3} \le u < e^{120},
\]
the range being forced by $z \ge 3$, $d \le z^{1/2}$ (whence
$u \ge z^{1/2} \ge \sqrt3$) and $z < C_0^2$ (whence $u \le z < e^{120}$). Here
$\log^{2}u < 120^{2} = 14400$, and $\log^{2}u/u \le 4e^{-2} < 0.55$ for
$u \ge 1$, with equality at $u = e^{2}$. So $A \ge 14401$ suffices, and no
constant of exponential size is needed. Assume from now on $X \ge C_0$.

If $t \in I$ is counted by $T(z;d,a,k)$ and $t > w$, then $t$ and $t+k$ are
primes exceeding $w$, so the only prime factors of $t(t+k)$ are $t$ and $t+k$,
both $\ge w$; in particular $\gcd(t(t+k), P(w)) = 1$. Hence
\[
  T(z;d,a,k) \;\le\; S(\mathcal{A}, \mathcal{P}, w) + w .
\]
By \eqref{eq:selberg} and Lemma~\ref{lem:G},
\[
  S(\mathcal{A},\mathcal{P},w)
  \;\le\; 2\,X\,V(w) \;+\; \sum_{e < \mathcal{D}^{2}} \mu^2(e)\,6^{\nu(e)}
  \;\ll\; X\,V(w) \;+\; X^{2/5}\log^{5} X ,
\]
using $|r_e| \le 2^{\nu(e)}$, $\mathcal{D}^2 = X^{2/5}$, and
$\sum_{e \le Y}\mu^2(e)6^{\nu(e)} \ll Y\log^{5}Y$. Both $X^{2/5}\log^5 X$ and
$w = X^{1/60}$ are $O(X/\log^{2}X)$, hence $O(X\,\mathfrak{S}_2(k)h(d)/
\log^2 X)$ since $\mathfrak{S}_2(k)h(d) \ge 1$.

It remains to bound $V(w)$. Discarding the factor $1 - \omega(2)/2 = 1/2 \le 1$
and splitting the odd primes according to whether they divide $k$,
\[
  V(w) \;\le\;
  \prod_{\substack{2 < \ell < w \\ \ell \nmid dk}}\Bigl(1 - \frac{2}{\ell}\Bigr)
  \prod_{\substack{2 < \ell < w,\ \ell \mid k \\ \ell \nmid d}}
    \Bigl(1 - \frac{1}{\ell}\Bigr)
  \;=\;
  \prod_{2 < \ell < w}\Bigl(1 - \frac{2}{\ell}\Bigr)
  \prod_{\substack{2 < \ell < w \\ \ell \mid d}}\frac{\ell}{\ell-2}
  \prod_{\substack{2 < \ell < w,\ \ell \mid k \\ \ell \nmid d}}
    \frac{\ell-1}{\ell-2},
\]
where the identity uses that the odd primes dividing $dk$ split into those
dividing $d$ and those dividing $k$ but not $d$, together with
$(1-\tfrac2\ell)^{-1} = \tfrac{\ell}{\ell-2}$ and
$(1-\tfrac1\ell)(1-\tfrac2\ell)^{-1} = \tfrac{\ell-1}{\ell-2}$. Extending the
last two products to all $\ell \mid d$, $\ell > 2$, respectively all
$\ell \mid k$, $\ell > 2$, only increases them, so
\[
  V(w) \;\le\; h(d)\,\mathfrak{S}_2(k)\prod_{2<\ell<w}\Bigl(1-\frac{2}{\ell}\Bigr).
\]
Finally, by Mertens,
\[
  \prod_{2 < \ell < w}\Bigl(1 - \frac{2}{\ell}\Bigr)
  = \prod_{2 < \ell < w}\Bigl(1 - \frac{1}{\ell}\Bigr)^{2}
    \prod_{2 < \ell < w}\Bigl(1 - \frac{1}{(\ell-1)^{2}}\Bigr)
  \;\sim\; \frac{4e^{-2\gamma}C_2}{\log^{2}w}
  \;=\; \frac{0.83242\ldots}{\log^{2}w},
\]
because $\frac{(\ell-2)\ell}{(\ell-1)^{2}} = 1 - \frac{1}{(\ell-1)^{2}}$ and
the second product converges to $C_2$; in particular
$\prod_{2<\ell<w}(1-2/\ell) \ll \log^{-2}w$ with an absolute implied constant.
Since $\log w = \tfrac{1}{60}\log X = \tfrac{1}{60}\log(z/d)$, we get
$V(w) \ll h(d)\mathfrak{S}_2(k)\log^{-2}(z/d)$ and therefore
$T(z;d,a,k) \ll \frac{z}{d}\,h(d)\mathfrak{S}_2(k)\log^{-2}(z/d)$, all implied
constants absolute. This is \eqref{eq:APbound}.
\end{proof}

\begin{lemma}\label{lem:sums}
With $g$ and $h$ as in \eqref{eq:ghdef}:
\begin{enumerate}
\item[(i)] $\displaystyle \sum_{d \le Z} g(d) \;\le\; 2\log Z$ for all
  $Z \ge 3$;
\item[(ii)] $\displaystyle \sum_{d \ge 1} \frac{g(d) h(d)}{d}
  \;=\; \prod_{\ell > 2}\Bigl(1 + \frac{1}{(\ell-2)^{2}}\Bigr)
  \;=:\; P_1 \;=\; 2.40500\ldots \;<\; \infty$;
\item[(iii)] $\displaystyle \sum_{d > Z} \frac{g(d)}{d}
  \;\le\; \frac{P_2}{\sqrt{Z}}$ for all $Z \ge 1$, where
  $P_2 := \prod_{\ell>2}\bigl(1 + \frac{1}{(\ell-2)\sqrt{\ell}}\bigr)
  = 2.32342\ldots < 2.33$.
\end{enumerate}
\end{lemma}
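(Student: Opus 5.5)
All three parts follow from the fact that $g$ is multiplicative, nonnegative and supported on odd squarefree integers, so each sum is compared with, or equals, an Euler product over odd primes.

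\emph{Part (ii).} The function $d \mapsto g(d)h(d)/d$ is multiplicative. At an odd prime $\ell$ it takes the value
\[
\frac{1}{\ell-2}\cdot\frac{\ell}{\ell-2}\cdot\frac{1}{\ell}=\frac{1}{(\ell-2)^2},
\]
and it vanishes at $2$ and on non-squarefree $d$. The terms are nonnegative and $\sum_\ell (\ell-2)^{-2}<\infty$, so the Euler product converges absolutely. Expanding it gives the identity $\sum_{d\ge1} g(d)h(d)/d=P_1$. The numerical value $2.40500\ldots$ is a computation: take the product over $\ell\le L$ and bound the tail factor by $\exp\bigl(\sum_{\ell>L}(\ell-2)^{-2}\bigr)$.

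\emph{Part (iii).} I use Rankin's trick with exponent $1/2$. For $d>Z$ we have $1\le (d/Z)^{1/2}$, so
\[
\sum_{d>Z}\frac{g(d)}{d}\le Z^{-1/2}\sum_{d\ge1}\frac{g(d)}{d^{1/2}}
= Z^{-1/2}\prod_{\ell>2}\Bigl(1+\frac{1}{(\ell-2)\sqrt{\ell}}\Bigr)=\frac{P_2}{\sqrt Z}.
\]
The product converges because its terms are $\asymp \ell^{-3/2}$. Again the numerical value $2.32342\ldots$ is a finite computation with a tail bound.

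\emph{Part (i).} This is the only place needing care, because the constant $2$ must be explicit. I would write $g(d)=\mu^2(d)\mathbf 1_{2\nmid d}\prod_{\ell\mid d}\frac{1}{\ell-2}$ and set $\frac{1}{\ell-2}=\frac1\ell\cdot\frac{\ell}{\ell-2}$. This gives $g(d)=\frac{1}{d}h(d)$ on odd squarefree $d$, and $h(d)=\sum_{m\mid d} \kappa(m)$ with $\kappa$ multiplicative, $\kappa(\ell)=\frac{2}{\ell-2}$.

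The bound $\sum_{d\le Z}g(d)\le \prod_{2<\ell\le Z}\bigl(1+\frac{1}{\ell-2}\bigr)$ is too lossy. Instead I would write $d=mn$ and bound
\[
\sum_{d\le Z} g(d)\le \sum_{m}\frac{\kappa(m)}{m}\sum_{\substack{n\le Z/m\\ n\text{ odd}}}\frac1n .
\]
The constraint $n$ odd gives $\sum_{n\le Y,\,n\text{ odd}}1/n\le \tfrac12\log Y+1$. Then $\sum_m \kappa(m)/m=\prod_{\ell>2}\bigl(1+\frac{2}{\ell(\ell-2)}\bigr)=2C_2^{-1}\cdot\tfrac12$-type constant, which is about $1.51$. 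This yields roughly $0.76\log Z+1.51$. That is at most $2\log Z$ once $\log Z\ge 1.3$, i.e.\ $Z\ge 3.7$.

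The main obstacle is the range $3\le Z<e^{c}$ for a small constant $c$, together with making the elementary harmonic estimate explicit. There I would check directly: for $3\le Z<5$ the sum is $g(1)+g(3)=2$ against $2\log 3\approx 2.197$. Further small cases go by direct enumeration until the analytic bound takes over.
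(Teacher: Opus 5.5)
Your parts (ii) and (iii) match the paper's argument: an Euler product for a nonnegative multiplicative summand, and Rankin's trick with exponent $1/2$. For (i) you take a genuinely different route, and it works, but your numerical constant is wrong.

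\textbf{The error in (i).} The constant you need is $\sum_m\kappa(m)/m=\prod_{\ell>2}\bigl(1+\frac{2}{\ell(\ell-2)}\bigr)\approx 2.14$. The value $1.51$ you quote is $C_2^{-1}=\prod_{\ell>2}\bigl(1+\frac{1}{\ell(\ell-2)}\bigr)$; already at $\ell=3$ the two factors are $5/3$ and $4/3$. So your bound is about $1.07\log Z+2.14$, not $0.76\log Z+1.51$. (The coefficient $0.76=\frac{1}{2C_2}$ is in fact the true asymptotic constant of $\sum_{d\le Z}g(d)$, but you cannot reach it after dropping the squarefree and coprimality conditions on $n$.) The corrected bound beats $2\log Z$ only from $Z\approx 10$ on. At $Z=5$ it gives about $3.86>2\log 5$, so the claim that $Z\ge 3.7$ suffices is false.

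\textbf{The repair.} Your own fallback fixes this. For a certified constant, use $\log(1+x)\le x$ and the telescoping identity $\frac{2}{n(n-2)}=\frac{1}{n-2}-\frac{1}{n}$ over odd $n\ge 5$. This gives $\prod_{\ell>2}\bigl(1+\frac{2}{\ell(\ell-2)}\bigr)\le\frac53e^{1/3}<2.33$, so the analytic bound wins once $Z\ge 17$. Sum only over $m\le Z$, so that the harmonic estimate $\frac12\log(Z/m)+1\le\frac12\log Z+1$ is legitimate. For $3\le Z<17$ the sum is a step function; at $Z=3,5,7,11,13,15$ it takes the values $2,\frac73,\frac{38}{15},\approx 2.64,\approx 2.74,\approx 3.07$. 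All of these lie below $2\log Z$, tightest at $Z=3$ where $2<2.197$.

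\textbf{Comparison with the paper.} Your aside that the full-product bound $\sum_{d\le Z}g(d)\le\prod_{2<\ell\le Z}\bigl(1+\frac{1}{\ell-2}\bigr)$ is ``too lossy'' is mistaken: it is exactly the paper's route. By Mertens that product is $\sim\frac{e^{\gamma}}{2C_2}\log Z\approx 1.35\log Z$. The paper checks the ratio to $\log Z$ at every prime up to $2\cdot 10^7$, where it peaks at $2/\log 3\approx 1.82$, and then argues that the ratio has settled near its limit. That tail step, made fully rigorous, needs explicit Mertens bounds. Your route avoids Mertens and any large computation. It uses only an explicit harmonic sum over odd integers, a convergent Euler product with an elementary tail, and a check of six values. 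It also yields a smaller leading constant, about $1.07$ against $1.35$, where the truth is $\frac{1}{2C_2}\approx 0.76$. The paper's route, in exchange, is a one-line reduction once the numerics are accepted.
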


\begin{proof}
All three sums run over odd squarefree $d$, the support of $g$.

(i) Every odd squarefree $d \le Z$ divides $\prod_{2 < \ell \le Z}\ell$, and
$g$ is multiplicative and nonnegative, so
$\sum_{d \le Z} g(d) \le \prod_{2 < \ell \le Z}\bigl(1+\frac{1}{\ell-2}\bigr)$.
Since $\frac{\ell-1}{\ell-2}
= \bigl(1-\frac1\ell\bigr)\bigl(1-\frac2\ell\bigr)^{-1}$, Mertens' theorems
give
\[
  \prod_{2<\ell\le Z}\Bigl(1+\frac{1}{\ell-2}\Bigr)
  \;=\; \frac{\prod_{2<\ell\le Z}(1-1/\ell)}{\prod_{2<\ell\le Z}(1-2/\ell)}
  \;\sim\; \frac{2e^{-\gamma}/\log Z}{4e^{-2\gamma}C_2/\log^{2}Z}
  \;=\; \frac{e^{\gamma}}{2C_2}\,\log Z,
\]
so the ratio of the product to $\log Z$ tends to $e^{\gamma}/(2C_2)
= 1.348967\ldots$. The product is constant between consecutive primes while
$\log Z$ increases, so the ratio decreases on each such interval and its
maxima occur only at $Z = \ell$ prime. It does \emph{not} decrease
monotonically thereafter (at $Z = 3, 5, 7, 11, 13$ it takes the values
$1.820478$, $1.656893$, $1.644475$, $1.482782$, $1.512228$, and it continues
to oscillate around its limit), but its \emph{maximum} is attained at the
first prime: evaluating at every prime $\ell \le 2\cdot10^{7}$, the largest
value is $2/\log 3 = 1.820478\ldots$ at $Z = 3$, and by $Z = 10^{7}$ the ratio
has settled to $1.34898\ldots$, within $10^{-5}$ of its limit, while each
further jump multiplies it by at most $1 + 10^{-7}$. Hence the ratio never
exceeds $2$, and the product is at most $2\log Z$ for every $Z \ge 3$.

(ii) The summand is multiplicative and nonnegative with local factor at an odd
prime $\ell$ equal to
$1 + \frac{1}{\ell-2}\cdot\frac{\ell}{\ell-2}\cdot\frac{1}{\ell}
 = 1 + \frac{1}{(\ell-2)^{2}}$; since $\sum_{\ell}(\ell-2)^{-2} < \infty$ the
Euler product converges absolutely, to $P_1 = 2.40500\ldots$ (computed over
$\ell < 3\cdot 10^{6}$, the tail being $O(10^{-6})$).

(iii) For $d > Z$ one has $1 \le (d/Z)^{1/2}$, so
\[
  \sum_{d > Z}\frac{g(d)}{d}
  \;\le\; \frac{1}{\sqrt{Z}}\sum_{d \ge 1}\frac{g(d)}{\sqrt{d}}
  \;=\; \frac{1}{\sqrt{Z}}\prod_{\ell>2}
    \Bigl(1 + \frac{1}{(\ell-2)\sqrt{\ell}}\Bigr),
\]
and the Euler product converges since $\sum_\ell \ell^{-3/2} < \infty$. For the
numerical value, the partial product over $\ell < Y := 10^{8}$ equals
$2.3233960\ldots$, and this is a \emph{lower} bound for $P_2$ because every
factor exceeds $1$. For the tail, $\log(1+x) \le x$ and
$\frac{1}{\ell-2} \le \frac{1.001}{\ell}$ give
\[
  \log\prod_{\ell > Y}\Bigl(1 + \frac{1}{(\ell-2)\sqrt\ell}\Bigr)
  \;\le\; 1.001\sum_{\ell > Y} \ell^{-3/2}
  \;\le\; \frac{3.77}{\sqrt{Y}\,\log Y}
  \;<\; 2.1 \cdot 10^{-5},
\]
the middle step by partial summation from the Chebyshev bound
$\pi(x) < 1.26\,x/\log x$, which gives
\[
  \sum_{\ell > Y}\ell^{-3/2}
  \;\le\; \tfrac32\int_Y^{\infty}\pi(x)\,x^{-5/2}\,dx
  \;\le\; \frac{3.77}{\sqrt{Y}\,\log Y}.
\]
Hence $2.32339 < P_2 < 2.32345$, and we record
$P_2 = 2.32342\ldots < 2.33$. Only the \emph{finiteness} of $P_2$ is
load-bearing: it is used only through the absolute constant $z_1$ of
Lemma~\ref{lem:crux}, so no digit of it matters.
\end{proof}

\begin{lemma}[The singular series over differences of twin members]
\label{lem:crux}
There are absolute constants $B$ and $z_1$ such that for every $z \ge z_1$,
\[
  \Sigma(z) \;:=\;
  \sum_{\substack{t,\, t' \in \T \cap [1,z] \\ t \ne t'}}
    \mathfrak{S}_2\bigl(|t - t'|\bigr)
  \;\le\; B\,\pi_\T(z)\,\frac{z}{\log^{2} z}.
\]
The statement is unconditional: no hypothesis on $\pi_\T$ is used.
\end{lemma}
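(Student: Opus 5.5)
The plan is to unfold the singular series through its divisor expansion \eqref{eq:divisorexpansion}, exchange the order of summation, and count twin members in a residue class with Lemma~\ref{lem:sieve}. Since $|t-t'|$ is a nonzero even integer at most $z$, \eqref{eq:divisorexpansion} gives
\[
  \Sigma(z) \;=\; \sum_{d \le z} g(d)\sum_{t \in \T\cap[1,z]} N_d(t),
  \qquad
  N_d(t) = \#\{t' \in \T\cap[1,z] : t' \ne t,\ t' \equiv t \!\!\pmod d\}.
\]
Only odd squarefree $d$ contribute, because that is the support of $g$. I will split the $d$-sum at $d = z^{1/2}$.

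\emph{Small moduli, $d \le z^{1/2}$.} A twin member $t'$ either has $t'+2$ prime, or has $t'-2 =: s$ prime with $s+2$ prime. In the first case $t'$ is counted by $T(z;d,t,2)$. In the second case $s \equiv t-2 \pmod d$, so $t'$ is counted by $T(z;d,t-2,2)$. Hence
\[
  N_d(t) \le T(z;d,t,2) + T(z;d,t-2,2).
\]
Now apply \eqref{eq:APbound} with $k=2$. Here $\mathfrak{S}_2(2)=1$, and $d \le z^{1/2}$ gives $\log(z/d) \ge \tfrac12\log z$. This yields
\[
  N_d(t) \le 8A\,\frac{z\,h(d)}{d\log^2 z} + 4,
\]
uniformly in $t$, and this uniformity is what makes the step work. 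Summing over $t$ and over $d \le z^{1/2}$, the main part is
\[
  8A\,\pi_\T(z)\,\frac{z}{\log^2 z}\sum_d \frac{g(d)h(d)}{d} \;=\; 8AP_1\,\pi_\T(z)\,\frac{z}{\log^2 z}
\]
by Lemma~\ref{lem:sums}(ii). The $+4$ terms contribute at most $4\pi_\T(z)\sum_{d\le z^{1/2}} g(d) \le 4\pi_\T(z)\log z$ by Lemma~\ref{lem:sums}(i). This is $\le \pi_\T(z)\,z/\log^2 z$ once $z$ is large.

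\emph{Large moduli, $z^{1/2} < d \le z$.} Here I use only the trivial count: the integers $t' \le z$ in the class of $t$ modulo $d$, other than $t$ itself, number at most $z/d$. Thus this range contributes at most
\[
  \pi_\T(z)\, z \sum_{d > z^{1/2}} \frac{g(d)}{d} \;\le\; P_2\,\pi_\T(z)\, z^{3/4}
\]
by Lemma~\ref{lem:sums}(iii). This is again $\le \pi_\T(z)\,z/\log^2 z$ for $z \ge z_1$, with $z_1$ absolute.

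Collecting the three pieces gives the lemma with $B = 8AP_1 + 2$. No hypothesis on $\pi_\T$ enters, since $\pi_\T(z)$ appears only as the number of outer terms $t$.

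The one point needing care is the reduction of membership in $\T$ to two prime-pair counts in progressions with $k=2$, including the shifted class $t-2$. Once that is in place, the uniformity of Lemma~\ref{lem:sieve} in the residue $a$ (including the degenerate case, absorbed in the $+2$) carries the argument. I also expect the convergence of $\sum_d g(d)h(d)/d$ to be the step that avoids any $\log\log$ loss. It depends on the local factor $1+(\ell-2)^{-2}$ being summable, rather than on an estimate like $\sum_d g(d)/d$, which diverges.
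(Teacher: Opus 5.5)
Your proposal is correct and follows the paper's proof essentially step for step: the same divisor expansion, the same reduction $N_d(t) \le T(z;d,t,2)+T(z;d,t-2,2)$ with the residue-uniform bound of Lemma~\ref{lem:sieve} and the constant $P_1$ for $d \le z^{1/2}$, and the trivial count with Lemma~\ref{lem:sums}(iii) above $z^{1/2}$. Your per-$t$ count is the paper's bound $N(d) \le \pi_\T(z)M(d)$ in another guise; because it excludes $t$ itself, it also saves the paper's $+1$ and the resulting $2\log z$ term.

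One side remark in your write-up is false, though harmless to the argument. The sum $\sum_d g(d)/d = \prod_{\ell>2}\bigl(1+\frac{1}{\ell(\ell-2)}\bigr)$ converges; Lemma~\ref{lem:sums}(iii) even uses $\sum_d g(d)/\sqrt{d}<\infty$. The sum that actually diverges is $\sum_{d\le Z} g(d)$, which grows like $\log Z$.
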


\begin{proof}
The elements of $\T$ are odd primes, so $t - t'$ is even and nonzero and
$\mathfrak{S}_2(|t-t'|)$ is defined; moreover $0 < |t-t'| < z$. By
\eqref{eq:divisorexpansion},
\[
  \Sigma(z) \;=\; \sum_{\substack{t,t' \in \T\cap[1,z] \\ t \ne t'}}
    \ \sum_{d \,\mid\, |t-t'|} g(d)
  \;=\; \sum_{d < z} g(d)\, N(d),
  \qquad
  N(d) = \#\{(t,t') : t \ne t',\ d \mid t-t'\},
\]
the outer sum being confined to $d < z$ because $d \mid t-t'$ and
$0 < |t-t'| < z$. Writing
$\pi_\T(z;d,a) = \#\{t \in \T : t \le z,\ t \equiv a \!\pmod d\}$, the pairs
with $t \equiv t' \pmod d$ (including $t = t'$) number
$\sum_{a \bmod d}\pi_\T(z;d,a)^{2}$, so
\[
  N(d) \;\le\; \sum_{a \bmod d}\pi_\T(z;d,a)^{2}
  \;\le\; \Bigl(\max_{a}\pi_\T(z;d,a)\Bigr)\sum_{a}\pi_\T(z;d,a)
  \;=\; \pi_\T(z)\,M(d),
\]
where $M(d) := \max_{a \bmod d}\pi_\T(z;d,a)$. Hence
\begin{equation}\label{eq:SigmaM}
  \Sigma(z) \;\le\; \pi_\T(z) \sum_{d < z} g(d)\, M(d),
\end{equation}
and it suffices to show $\sum_{d<z} g(d)M(d) \le B z/\log^{2}z$. We split at
$z^{1/2}$. Throughout, only odd squarefree $d$ contribute, $g$ vanishing
elsewhere.

\emph{Small moduli, by the sieve.} Let $t \in \T$ with $t \le z$ and
$t \equiv a \pmod d$. Either $t, t+2$ are both prime, or $t-2, t$ are both
prime; in the second case $s = t-2$ satisfies $s \le z$, $s \equiv a-2 \pmod d$
and $s, s+2$ both prime. Hence
\[
  \pi_\T(z;d,a) \;\le\; T(z;d,a,2) + T(z;d,a-2,2).
\]
As $\mathfrak{S}_2(2) = 1$ (empty product), Lemma~\ref{lem:sieve} gives, for
odd squarefree $d \le z^{1/2}$ and using $\log(z/d) \ge \tfrac12\log z$,
\[
  M(d) \;\le\; 2A\,\frac{z}{d}\cdot\frac{h(d)}{\log^{2}(z/d)} + 4
  \;\le\; \frac{8A\,z\,h(d)}{d\,\log^{2}z} + 4 .
\]
Therefore, by Lemma~\ref{lem:sums}(ii) and (i),
\begin{equation}\label{eq:M1}
  \sum_{d \le z^{1/2}} g(d)M(d)
  \;\le\; \frac{8A\,z}{\log^{2}z}\sum_{d\ge1}\frac{g(d)h(d)}{d}
    + 4\sum_{d \le z^{1/2}} g(d)
  \;\le\; \frac{8AP_1\,z}{\log^{2}z} + 4\log z ,
\end{equation}
the last term because Lemma~\ref{lem:sums}(i) with $Z = z^{1/2}$ gives
$\sum_{d \le z^{1/2}} g(d) \le 2\log(z^{1/2}) = \log z$ (legitimate for
$z \ge 9$).

\emph{Large moduli, trivially.} For any $d \ge 1$ a residue class modulo $d$
meets $[1,z]$ in at most $z/d + 1$ integers, so $M(d) \le z/d + 1$. By
Lemma~\ref{lem:sums}(iii) with $Z = z^{1/2}$ and Lemma~\ref{lem:sums}(i),
\begin{equation}\label{eq:M2}
  \sum_{z^{1/2} < d < z} g(d)M(d)
  \;\le\; z \sum_{d > z^{1/2}}\frac{g(d)}{d} + \sum_{d < z} g(d)
  \;\le\; P_2\,z^{3/4} + 2\log z .
\end{equation}

Adding \eqref{eq:M1} and \eqref{eq:M2}, and using
$P_2 z^{3/4} + 6\log z \le AP_1 z/\log^{2}z$ for all $z$ beyond an absolute
threshold $z_1$ (valid since $z^{3/4}\log^{2}z = o(z)$), we obtain
$\sum_{d<z} g(d)M(d) \le B\,z/\log^{2}z$ with $B = 9AP_1$. With
\eqref{eq:SigmaM} this is the assertion.
\end{proof}

\begin{remark}\label{rem:nologlog}
Lemma~\ref{lem:crux} is where a $\log\log$ factor is usually feared, and three
distinct mechanisms could have produced one. (a) Brun's sieve yields only
$\pi_\T(z) \ll z(\log\log z)^{2}/\log^{2}z$, the bound quoted after
Proposition~\ref{prop:hard}; we use Selberg's sieve instead, which has no such
loss. (b) The \emph{two-sided} regularity condition genuinely fails for our
sieve problem, by $\asymp \sum_{\ell \mid k}\frac{\log\ell}{\ell}$, since
$\omega(\ell) = 1$ on the primes dividing the shift $k$; this is $O(1)$ for
typical $k$ but of the extremal order $\log\log k$ when $k$ is a primorial,
and that is the worst case one must survive. An upper-bound sieve, however,
needs only the one-sided
condition $(\Omega_2)$, and Lemma~\ref{lem:G} was proved from that alone.
(c) A cruder substitute for Lemma~\ref{lem:sieve}, say
$M(d) \ll \frac{z}{\phi(d)}\bigl(\frac{d}{\phi(d)}\bigr)^{2}\log^{-2}(z/d)$,
would also suffice: what matters in \eqref{eq:M1} is only that the arithmetic
factor is divided by $d$, so that the local factor of the resulting Euler
product is $1 + O(\ell^{-2})$ and the product converges. Accordingly the
conclusion of Theorem~\ref{thm:romanov} is genuine positive density, not
$\gg 1/\log\log x$.
\end{remark}

\begin{proof}[Proof of Theorem~\ref{thm:romanov}]
Let $x$ be a real number, $x \ge \max(z_0, z_1, 3)$, and set
\[
  r_x(n) \;=\; \#\{\, (p,t) \in \PP^{*}\times\T :
    p \le x,\ t \le x,\ p + t = n \,\}.
\]
If $r_x(n) > 0$ then $n$ is a sum of two odd primes, so $n$ is even with
$6 \le n \le 2x$, and $n \in \mathcal{R}$. Put
$\mathcal{N}_x = \{n : r_x(n) > 0\} \subseteq \mathcal{R}\cap[6,2x]$.

\emph{First moment.} Each pair $(p,t) \in (\PP^{*}\cap[1,x]) \times
(\T\cap[1,x])$ is counted once, so
\begin{equation}\label{eq:firstmoment}
  \sum_{n} r_x(n) \;=\; \pi^{*}(x)\,\pi_\T(x).
\end{equation}

\emph{Second moment.} By definition,
\[
  \sum_n r_x(n)^{2} \;=\;
  \#\{\, (p,t,p',t') \in (\PP^{*})^{2}\times\T^{2},\ \text{all} \le x :
    p + t = p' + t' \,\}.
\]
If $p = p'$ then $t = t'$, and conversely; so the quadruples split into a
diagonal part, of size exactly $\pi^{*}(x)\pi_\T(x)$, and an off-diagonal part
with $p \ne p'$ and $t \ne t'$. For an off-diagonal quadruple put
$k = t' - t = p - p' \ne 0$; since $t,t'$ are odd, $k$ is even, and $|k| < x$.
Fix an ordered pair $(t,t')$ with $t \ne t'$. If $k > 0$ the admissible
$(p,p')$ are given by the odd primes $p' \le x$ with $p'+k \le x$ also prime,
and if $k < 0$ by the odd primes $p \le x$ with $p + |k| \le x$ also prime;
either way their number is at most
$\#\{q \le x : q,\, q+|k| \in \PP\}$, which by \eqref{eq:twinbound} is at most
$A\,\mathfrak{S}_2(|k|)\,x/\log^{2}x$. Summing over the ordered pairs $(t,t')$
and applying Lemma~\ref{lem:crux} at $z = x$,
\begin{equation}\label{eq:secondmoment}
  \sum_n r_x(n)^{2}
  \;\le\; \pi^{*}(x)\pi_\T(x)
    + \frac{A\,x}{\log^{2}x}\,\Sigma(x)
  \;\le\; \pi^{*}(x)\pi_\T(x)
    + AB\,\pi_\T(x)\,\frac{x^{2}}{\log^{4}x}.
\end{equation}

\emph{The diagonal is negligible.} By Chebyshev's upper bound,
$\pi(x) \le 2x/\log x$ for $x \ge 2$; indeed the maximum of
$\pi(x)\log x / x$ over $x \ge 2$ is $1.2551\ldots$, attained at $x = 113$.
Hence $\pi^{*}(x) \le 2x/\log x$ and
\[
  \frac{\pi^{*}(x)\pi_\T(x)}{AB\,\pi_\T(x)x^{2}/\log^{4}x}
  \;\le\; \frac{2\log^{3}x}{AB\,x} \;\le\; 1
\]
for $x \ge x_2$, with $x_2$ absolute. For such $x$ the right-hand side
of \eqref{eq:secondmoment} is thus at most
$2AB\,\pi_\T(x)\,x^{2}/\log^{4}x$.

\emph{Cauchy--Schwarz.} From
$\sum_n r_x(n) = \sum_{n \in \mathcal{N}_x} r_x(n)\cdot 1$ we get
$\bigl(\sum_n r_x(n)\bigr)^{2} \le |\mathcal{N}_x| \sum_n r_x(n)^{2}$, so by
\eqref{eq:firstmoment} and the previous paragraph, for
$x \ge \max(z_0, z_1, x_2)$,
\[
  |\mathcal{N}_x| \;\ge\;
  \frac{\pi^{*}(x)^{2}\pi_\T(x)^{2}}{2AB\,\pi_\T(x)\,x^{2}/\log^{4}x}
  \;=\; \frac{\pi^{*}(x)^{2}\,\pi_\T(x)\,\log^{4}x}{2AB\,x^{2}} .
\]
By Chebyshev's lower bound (or $\pi(x) > x/\log x$ for $x \ge 17$) there is an
absolute $x_3$ with $\pi^{*}(x) \ge \tfrac12 x/\log x$ for $x \ge x_3$, whence
$\pi^{*}(x)^{2} \ge x^{2}/(4\log^{2}x)$ and
\[
  |\mathcal{N}_x| \;\ge\; \frac{\pi_\T(x)\log^{2}x}{8AB} .
\]
Finally $\mathrm{TW}(c)$ gives $\pi_\T(x)\log^{2}x \ge c\,x$ for $x \ge z_0$.
So with the absolute constant $K := 8AB = 72 P_1 A^{2}$ and
$x_0 := \max(z_0, z_1, x_2, x_3)$,
\[
  \#\bigl(\mathcal{R}\cap[1,2x]\bigr) \;\ge\; |\mathcal{N}_x|
  \;\ge\; \frac{c}{K}\,x \qquad (x \ge x_0).
\]
Given $y \ge 2x_0$, apply this with $x = y/2$ (all definitions above make
sense for real $x$) to get
$\#(\mathcal{R}\cap[1,y]) \ge \frac{c}{K}\cdot\frac{y}{2}$. Since the number of
even integers in $[1,y]$ is $\lfloor y/2\rfloor$, dividing and letting
$y \to \infty$ gives the density statement.
\end{proof}

\begin{remark}\label{rem:size}
The constant $K = 8AB = 72P_1A^{2}$ is explicit in terms of the sieve constant
$A$ of Lemma~\ref{lem:sieve}, whose trivial small-$z$ range needs only
$A \ge 14401$. A wasteful choice of sieve parameters is costly: $A_2 = 4$ in
place of $A_2 = 0$ would force $K \gtrsim 3\cdot10^{12}$ rather than
$K \gtrsim 4\cdot10^{10}$. No statement depends on the numerical value of $K$,
only on its being absolute.
\end{remark}

\begin{remark}
The hypothesis is used exactly once, in the final line, and only through the
single inequality $\pi_\T(x) \ge cx/\log^{2}x$. The proof therefore also gives
an upper-density version under a weaker hypothesis: if
$\pi_\T(x_j) \ge c\,x_j/\log^{2}x_j$ merely along some sequence
$x_j \to \infty$, then
$\limsup_{y\to\infty}\#\{n \le y : n \in \mathcal{R}\}/(y/2) \ge c/K$.
\end{remark}

\begin{remark}\label{rem:gap}
\emph{Relation to Conjecture~\ref{conj:S}.} Statement (S) asserts density
$1$: $\mathcal{R}$ contains \emph{every} even $n \ge 6$, while
Theorem~\ref{thm:romanov} delivers density
$\ge c/K$ under $\mathrm{TW}(c)$. By Proposition~\ref{thm:hierarchy} a
positive proportion already implies the twin prime conjecture, so the theorem
cannot be made unconditional. Passing from a positive proportion to all even
$n$ is a different problem: Cauchy--Schwarz gives no information about any
individual $n$, and a density theorem is not an every-integer theorem.
\end{remark}

\section{What a bound on the least twin witness would give}\label{sec:hard}

Write $t_{\min}(n) = \min\{t \in \T : n - t \text{ prime}\}$ for the least
twin witness of an even $n$, defined whenever (S) holds for $n$. The
computations of Section~\ref{sec:comp} show it to be small and regular. The
following unconditional proposition says how much such a bound would deliver,
and why the regularity should not be mistaken for an accessible theorem.

\begin{proposition}\label{prop:hard}
Suppose there are constants $A, C > 0$ such that $t_{\min}(n)$ is defined
and satisfies $t_{\min}(n) \le C\log^A n$ for every even $n \ge 6$. Then
\[
\pi_\T(z) \;\gg_{A,C}\; z^{1/A},
\qquad\text{where } \pi_\T(z) = \#\{t \in \T : t \le z\}.
\]
In particular $\T$ is infinite, so any such bound implies the twin prime
conjecture, with a power-of-$z$ lower bound on the number of twin members.
\end{proposition}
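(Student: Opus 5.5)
Let $z$ be large. The plan is a counting argument in the spirit of Proposition~\ref{thm:hierarchy}, but run at a scale matched to $z$ rather than in the limit.

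Choose $x$ with $C\log^A(2x) = z$, i.e.\ $x = \tfrac12\exp\bigl((z/C)^{1/A}\bigr)$. By hypothesis every even $n \in [6, 2x]$ has a witness $t \in \T$ with $t \le C\log^A n \le z$ and $n - t$ prime. Hence every such $n$ lies in
\[
  \bigcup_{t \in \T,\ t \le z} \bigl(t + \PP\bigr) \cap [1,2x].
\]
Each set $t + \PP$ meets $[1,2x]$ in at most $\pi(2x)$ elements. The number of even $n \in [6,2x]$ is $x - 2$, so
\[
  x - 2 \;\le\; \pi_\T(z)\,\pi(2x) \;\le\; \pi_\T(z)\cdot\frac{4x}{\log(2x)},
\]
using the Chebyshev bound $\pi(y) \le 2y/\log y$ already invoked in the proof of Theorem~\ref{thm:romanov}. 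This gives
\[
  \pi_\T(z) \;\gg\; \log(2x) \;=\; (z/C)^{1/A},
\]
which is $\gg_{A,C} z^{1/A}$.

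It remains to handle the requirement that $z$ be large enough that $x$ is large (say $x \ge 4$, so that $x - 2 \ge x/2$). For bounded $z$ the claim is trivial once $\pi_\T(z) \ge 1$, e.g.\ $3 \in \T$ for $z \ge 3$, after adjusting the implied constant. Infinitude of $\T$ is then immediate, since $z^{1/A} \to \infty$.

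The only point needing any care is choosing $x$ as a function of $z$ by inverting $C\log^A$, and making sure that $t_{\min}(n) \le z$ holds uniformly for all $n \le 2x$. That uniformity holds because $\log^A n$ is increasing, so I expect no real obstacle beyond bookkeeping of constants.
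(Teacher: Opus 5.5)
Your proof is correct and is essentially the paper's argument. With $N = 2x = \exp\bigl((z/C)^{1/A}\bigr)$, the paper likewise covers every even $n \in [6,N]$ by $\{p \le N\} + \{t \in \T : t \le z\}$, compares $\pi(N)\,\pi_\T(z)$ with the roughly $N/2$ even numbers, and applies Chebyshev to get $\pi_\T(z) \gg \log N = (z/C)^{1/A}$. The only slip is cosmetic: the number of even $n \in [6,2x]$ is $\lfloor x \rfloor - 2$, not $x-2$, which changes nothing.
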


\begin{proof}
Let $z > C(\log 10)^A$ and put $N = \exp\bigl((z/C)^{1/A}\bigr)$, so that
$C\log^A N = z$ and $N > 10$. By hypothesis every even $n$ with
$6 \le n \le N$ can be written $n = p + t$ with $p$ prime and $t \in \T$,
namely $t = t_{\min}(n)$ and $p = n - t$; here
$t \le C\log^A n \le C \log^A N = z$ and $p < n \le N$. Hence the
addition map
\[
(p,\,t) \;\longmapsto\; p + t, \qquad
\{p \text{ prime}: p \le N\} \times \{t \in \T : t \le z\},
\]
has image containing every even $n$ in $[6, N]$. Its domain has exactly
$\pi(N)\,\pi_\T(z)$ elements, and the number of even $n$ in $[6, N]$ is
$\lfloor N/2 \rfloor - 2 \ge (N-5)/2 \ge N/4$, the last step because
$N > 10$. Counting the image by the domain therefore gives
\[
\pi(N)\,\pi_\T(z) \;\ge\; \frac{N}{4}.
\]
By Chebyshev's bound $\pi(N) \ll N/\log N$, so
\[
\pi_\T(z) \;\ge\; \frac{N/4}{\pi(N)} \;\gg\; \log N
\;=\; \Bigl(\frac{z}{C}\Bigr)^{1/A} \;\gg_{A,C}\; z^{1/A}. \qedhere
\]
\end{proof}

A positive answer with any exponent $A$, even the weak $A = 3$ of
Question~\ref{q:main} below, yields $\pi_\T(z) \gg z^{1/3}$, far beyond the
twin prime conjecture, which asserts only that $\pi_\T(z) \to \infty$. That
question is therefore not merely open: it is a statement about the density of
twin primes rather than about Goldbach-type representations. In the opposite
direction, Brun's $1920$ upper bound
$\pi_\T(z) \ll z\,(\log\log z)^2/\log^2 z$ \cite{Brun} shows unconditionally
that no such bound can hold with $A \le 1$: $t_{\min}(n) \ll \log n$ would
force $\pi_\T(z) \gg z$, which any upper bound of size $o(z)$ contradicts.

\section{Orientation rigidity modulo 3}\label{sec:mod3}

A \emph{Goldbach partition} of an even $n$ is an unordered pair $\{p, q\}$ of
primes with $p + q = n$. Every twin pair $(t, t+2)$ other than $(3,5)$ has the
form $(6k-1, 6k+1)$. Call $t \in \T$ a \emph{lower member} if $t+2$ is prime
and an \emph{upper member} if $t-2$ is prime ($5$ is both). The orientation is
not free.

\begin{proposition}\label{thm:orient}
Let $n \ge 6$ be even with $n \not\equiv 0 \pmod 3$, and let
$n = p + q$ with $p, q$ prime.
\begin{enumerate}
\item If $n \equiv 1 \pmod 3$ and $p$ is an upper member, then $p = 5$ or
$q = 3$.
\item If $n \equiv 2 \pmod 3$ and $p$ is a lower member, then $p = 3$ or
$q = 3$.
\end{enumerate}
Consequently, apart from the finitely described exceptions involving the
primes $3$ and $5$, all twin members in Goldbach partitions of $n$ are
lower members when $n \equiv 1 \pmod 3$, and upper members when
$n \equiv 2 \pmod 3$. For $n \equiv 0 \pmod 3$ the argument excludes
neither orientation.
\end{proposition}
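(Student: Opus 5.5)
The argument is a direct residue computation modulo $3$, using only that a prime other than $3$ is $\not\equiv 0 \pmod 3$. I would handle the two parts separately and then read off the consequence.

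For part 1, let $n \equiv 1 \pmod 3$ and let $p$ be an upper member, so $p-2$ is prime. If $p = 5$ we are done. Otherwise $p-2$ and $p$ are primes greater than $3$: $p-2 \ge 3$, and $p-2 = 3$ would give $p=5$. Since $p-2 \ne 3$ and $p \ne 3$, neither is divisible by $3$. Among $p-2, p$, the residues mod $3$ must avoid $0$, so $p \not\equiv 2 \pmod 3$ (else $3 \mid p-2$) and $p \not\equiv 0$. Hence $p \equiv 1 \pmod 3$. Then $q = n - p \equiv 0 \pmod 3$, and as $q$ is prime, $q = 3$.

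For part 2, let $n \equiv 2 \pmod 3$ and let $p$ be a lower member, so $p+2$ is prime. If $p = 3$ we are done. Otherwise $p \ne 3$, and $p+2 > 3$ is prime, so $3 \nmid p$ and $3 \nmid p+2$. This excludes $p \equiv 0$ and $p \equiv 1 \pmod 3$, forcing $p \equiv 2 \pmod 3$. Then $q \equiv 0 \pmod 3$, so $q = 3$.

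The consequence follows by applying each part to every partition $\{p,q\}$ with $p$ the twin member. Outside partitions containing $3$ or $5$, a twin member occurring when $n \equiv 1 \pmod 3$ cannot be upper, hence is lower. When $n \equiv 2 \pmod 3$ it cannot be lower, hence is upper.

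For $n \equiv 0 \pmod 3$ I would simply note that both orientations are consistent with the congruences. A lower member $p \equiv 2$ gives $q \equiv 1$, and an upper member $p \equiv 1$ gives $q \equiv 2$; neither forces $3 \mid q$. One could also exhibit examples, such as $12 = 5+7$, where $5$ and $7$ are twins of both kinds.

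There is no real obstacle here. The only care needed is bookkeeping of the small exceptions: checking that $p-2=3$ is exactly the case $p=5$, and that $p+2 = 3$ cannot occur.
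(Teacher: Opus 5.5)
Your proof is correct and is essentially the paper's mod-$3$ residue argument, run in the contrapositive direction. You first fix $p \bmod 3$ from its twin partner ($p\equiv 1$ for an upper member $p\neq 5$, $p\equiv 2$ for a lower member $p\neq 3$) and then read off $q\equiv 0$, hence $q=3$; the paper instead assumes $q\neq 3$ (and $p\neq 3$), gets $p\equiv q \pmod 3$ from $n$, and then finds the twin partner divisible by $3$, hence equal to $3$. Your small-case bookkeeping and your deduction of the consequence are both fine.
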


\begin{proof}
Note first that $2 \notin \T$ ($0$ and $4$ are not prime) and that $3$ is
not an upper member ($1$ is not prime). If $p$ is a twin member then $p$
is odd, and as $n \ge 6$ is even the complementary prime $q = n - p$ is
odd as well. (1) Let $n \equiv 1 \pmod 3$ with $p$ an upper member, so
$p \ge 5$, and assume $q \ne 3$. Then $p, q \not\equiv 0 \pmod 3$, and
$p + q \equiv 1 \pmod 3$ forces $p \equiv q \equiv 2 \pmod 3$. As $p$ is
an upper member, $p - 2$ is a prime $\equiv 0 \pmod 3$, so $p - 2 = 3$ and
$p = 5$. (2) is the mirror argument: let $n \equiv 2 \pmod 3$ with $p$ a
lower member, and assume $p \ne 3$ and $q \ne 3$. Then
$p + q \equiv 2 \pmod 3$ forces $p \equiv q \equiv 1 \pmod 3$; as $p$ is a
lower member, $p + 2$ is a prime $\equiv 0 \pmod 3$, i.e.\ $p + 2 = 3$,
which is impossible.
\end{proof}

Nothing further is forced in the remaining class: both orientations occur for
every even $12 \le n \le 10^6$ with $n \equiv 0 \pmod 3$. The rigidity has the
following consequence, used in Section~\ref{sec:heur}.

\begin{corollary}\label{cor:local}
There is no local obstruction to (S): for every even $n \ge 6$ there is
an orientation pattern ($t,\ t+2,\ n-t$ all prime, or $t,\ t-2,\ n-t$
all prime) that is admissible modulo every prime $\ell$ simultaneously,
and the associated singular series (the product of local densities
defined in Section~\ref{sec:heur}) is bounded away from $0$ uniformly in
$n$. For $n \not\equiv 0 \pmod 3$ that pattern is the one forced by
Proposition~\ref{thm:orient}; for $n \equiv 0 \pmod 3$ both patterns are
admissible modulo every prime.
\end{corollary}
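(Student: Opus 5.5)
The statement is purely local, so I would prove it prime by prime and then control the Euler product.

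For each even $n$ and each orientation $\epsilon \in \{+2,-2\}$, consider the polynomial triple $(X,\ X+\epsilon,\ n-X)$. For a prime $\ell$, let $\rho_\epsilon(\ell)$ be the number of residues $X \bmod \ell$ with $X(X+\epsilon)(n-X) \equiv 0 \pmod \ell$. The pattern is admissible at $\ell$ exactly when $\rho_\epsilon(\ell) < \ell$. The local density is $(1-\rho_\epsilon(\ell)/\ell)(1-1/\ell)^{-3}$, and the singular series is the product of these local densities over $\ell$.

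\textbf{Admissibility.} First I check admissibility prime by prime.

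At $\ell = 2$: $X$ odd makes $X$, $X\pm2$ and $n-X$ all odd, since $n$ is even. So $\rho(2)=1 < 2$.

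At $\ell \ge 5$: the three roots $0$, $-\epsilon$, $n$ give $\rho(\ell) \le 3 < \ell$, so every pattern is admissible.

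At $\ell = 3$ everything is decided.
\begin{itemize}
\item If $n \equiv 0 \pmod 3$, the roots are $0$ and $\mp 2$, which are two residues. A free residue remains for both orientations, so both are admissible, as claimed.
\item If $n \equiv 1 \pmod 3$, the lower pattern ($\epsilon=+2$) has roots $0$, $1$, $1$. It leaves $X \equiv 2$ free, so it is admissible. The upper pattern has roots $0$, $2$, $1$, which cover all residues.
\item If $n \equiv 2 \pmod 3$, the upper pattern has roots $0$, $2$, $2$ and leaves $X \equiv 1$ free. The lower pattern has roots $0$, $1$, $2$, which cover everything.
\end{itemize}
This matches the orientation forced by Proposition~\ref{thm:orient}, and the conjecture's $n \ge 6$ requires no further care.

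\textbf{Uniform lower bound.} Next I bound the singular series of the chosen pattern from below, uniformly in $n$. The finitely many factors at $\ell = 2, 3$ are fixed positive numbers: at $\ell=3$ we have $\rho \le 2$, so the factor is at least $(1/3)(3/2)^3$.

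For $\ell \ge 5$, $\rho(\ell)=3$ when $\ell \nmid n(n+\epsilon)$ and $\rho(\ell)=2$ otherwise. Since $1-2/\ell \ge 1-3/\ell$, each factor is at least $(1-3/\ell)(1-1/\ell)^{-3}$. Expanding gives $(1-3/\ell)(1-1/\ell)^{-3} = 1 - O(\ell^{-2})$. The product over $\ell \ge 5$ of these $n$-independent factors converges to a positive constant because $\sum \ell^{-2}<\infty$, and no factor vanishes since $\ell \ge 5$. Hence the singular series is at least an absolute positive constant.

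One subtlety is that the product is only conditionally convergent when one does not group terms this way. I would therefore define the singular series (consistently with Section~\ref{sec:heur}) as the limit of partial products. The comparison above is then a termwise lower bound on an absolutely convergent comparison product, which is harmless.

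\textbf{Main obstacle.} The step needing care is the uniformity. The factors at primes dividing $n(n\pm2)$ change with $n$, and one must note that they only increase the product relative to the generic factor, never decrease it. Once the monotonicity $1-2/\ell \ge 1-3/\ell$ is observed, the bound is immediate.
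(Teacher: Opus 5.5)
Your proposal is correct and follows the paper's proof essentially step for step. It checks admissibility prime by prime with the decisive case analysis at $\ell=3$ matching Proposition~\ref{thm:orient}, uses the fixed factors $4$ and $9/8$ at $\ell=2,3$, and for $\ell\ge5$ uses the $n$-independent lower bound $(1-3/\ell)(1-1/\ell)^{-3}=1-O(\ell^{-2})$, which gives a convergent positive product. Your worry about conditional convergence is unnecessary: for each fixed $n$ only finitely many $\ell\ge5$ have $\rho(\ell)=2$, so the grouped Euler product converges absolutely. Also, since $\mathfrak{S}(n)=G_{\mathrm{low}}(n)+G_{\mathrm{up}}(n)$ has nonnegative summands, your bound for the admissible orientation bounds the combined series as well.
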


\begin{proof}
For $\ell = 2$ take $t$ odd. For $\ell \ge 5$ the pattern forbids at most
the three classes $t \equiv 0,\, \mp 2,\, n \pmod \ell$, leaving
$\ell - 3 > 0$ classes. For $\ell = 3$ the lower pattern is admissible iff
$n \not\equiv 2 \pmod 3$ and the upper iff $n \not\equiv 1 \pmod 3$, so
the orientation forced by Proposition~\ref{thm:orient} is admissible at every
prime simultaneously. For $\ell \ge 5$ at most three classes are ever
forbidden, so each such local factor is at least
$(1 - 3/\ell)(1 - 1/\ell)^{-3}$, and the product of these bounds over
$\ell \ge 5$ converges to a positive limit. The two remaining factors are
the $n$-independent constants $4$ at $\ell = 2$ (where only $t \equiv 0$
is forbidden, $n$ being even) and $9/8$ at $\ell = 3$ for the admissible
orientation. Hence the series is bounded below by a uniform $c > 0$.
\end{proof}

A parallel mod-$6$ observation explains the shape of Dubner's exception list:
for $a, b \equiv 5 \pmod 6$ the four pairwise sums of $(a, a+2)$, $(b, b+2)$
are $a+b$, $a+b+2$ (twice), $a+b+4$, so coverage of the triple
$\{6k-2,\, 6k,\, 6k+2\}$ is all-or-nothing apart from representations with the
summand $3$. The observed exceptions are $11$ such triples, with centres $6m$,
$m \in \{16, 67, 86, 131, 151, 186, 191, 211, 226, 541, 701\}$
(cf.\ \cite{Dubner, OEIS}): the $33$ terms $\ge 6$ of A007534.

\section{Heuristics for the least twin witness}\label{sec:heur}

The model is heuristic, set down only to calibrate Question~\ref{q:main}
below; the numerical facts after it are exact, and are evidence for the model
rather than consequences of it.

A representation $n = p + t$ with $t$ a lower member is a solution of the
triple pattern ``$t,\ t+2,\ n-t$ all prime.'' The Hardy--Littlewood
$k$-tuple heuristic \cite{HL} predicts, summing both orientations,
\[
R_\T(n) \;=\; \#\{(p,t): n = p + t,\ t \in \T\}
\;\sim\; \bigl(G_{\mathrm{low}}(n) + G_{\mathrm{up}}(n)\bigr)\,
\frac{n}{\log^3 n},
\]
where $G_{\mathrm{low/up}}(n) = \prod_{\ell}
(1 - \nu_\ell/\ell)(1 - 1/\ell)^{-3}$, with
$\nu_\ell = \#\{0, -2, n \bmod \ell\}$ for $G_{\mathrm{low}}$ and
$\nu_\ell = \#\{0, 2, n \bmod \ell\}$ for $G_{\mathrm{up}}$ the number of
residue classes occupied by the pattern modulo $\ell$
($\nu_\ell \le 3 < \ell$ for $\ell \ge 5$; Corollary~\ref{cor:local}). We
write $\mathfrak{S}(n) = G_{\mathrm{low}}(n) + G_{\mathrm{up}}(n)$ for the
combined singular series, reserving the upright (S) for
Conjecture~\ref{conj:S}, and
\[
C_2 \;=\; \prod_{\ell > 2}\Bigl(1 - \frac{1}{(\ell-1)^2}\Bigr)
\;\approx\; 0.6602
\]
for the twin-prime constant. Applying the prediction to the triples
$(t,\, t+2,\, n-t)$ and $(t,\, t-2,\, n-t)$ with $t \le T$ gives for the
expected number of twin witnesses of $n$ below $T$
\[
W(n, T) \;\approx\; \mathfrak{S}(n)\,\frac{T}{\log^2 T\,\log n},
\]
up to bounded factors, the three logarithms corresponding to the primes $t$,
$t \pm 2$ and $n - t$. Solving $W(n,T) = 1$ gives, for the typical size of the
least witness,
\[
t_{\min}(n) \;\approx\; \frac{\log n\,(\log\log n)^2}{\mathfrak{S}(n)},
\]
since $\log T \approx \log\log n$ at that scale: this is $\log n$ up to a
slowly varying factor, matching the observed medians, which across the
$50{,}001$ blocks of the run to $10^{12}$ take only the six values
$13, 17, 19, 29, 31, 41$.

For the maximum over $n \le N$, model the witness events as independent,
so that $\Pr[t_{\min}(n) > T] \approx \exp(-W(n,T))$. The expected number
of even $n \le N$ with $t_{\min}(n) > T$ is then
\[
\sum_{n \le N} \exp\bigl(-W(n,T)\bigr)
\;=\; \sum_{n \le N}
\exp\!\Bigl(-\frac{\mathfrak{S}(n)\,T}{\log^2 T\,\log n}\Bigr),
\]
a sum dominated by the $n$ whose singular series is nearly minimal,
$\mathfrak{S}(n) \approx c_{\min} := \inf_n \mathfrak{S}(n)$. Equating the
dominant exponent with $\log N$, and using $\log T \approx 2\log\log N$ at
the resulting scale, yields
\[
\max_{n \le N} t_{\min}(n) \;\approx\;
\frac{4}{c_{\min}}\,(\log N)^2 (\log\log N)^2 .
\]
The exponent predicted is thus $2$, not $3$, up to the $(\log\log)^2$
factor. Against the exhaustive maxima of Section~\ref{sec:comp} the
right-hand side reads $9378$ against $14{,}449$ at $10^{11}$, $11{,}769$
against $14{,}549$ at $10^{12}$, $14{,}487$ against $20{,}747$ at $10^{13}$
and $17{,}542$ against $23{,}029$ at $10^{14}$: ratios of observed to
predicted of $1.54$, $1.24$, $1.43$ and $1.31$. The order is right throughout,
which is all a model of this crudeness can claim; the ratio oscillates and
shows no trend.

The extremal mechanism the model proposes is visible in the data. The infimum
is
\[
c_{\min} \;=\; 4 \cdot \frac{9}{8}
\prod_{\ell \ge 5}\Bigl(1 - \frac{3}{\ell}\Bigr)
\Bigl(1 - \frac{1}{\ell}\Bigr)^{-3}
\;=\; 2.8582487\ldots,
\]
attained exactly when $n \not\equiv 0 \pmod 3$ and no prime $\ell \ge 5$
divides $n(n+2)$ (for $n \equiv 1 \bmod 3$), resp.\ $n(n-2)$ (for
$n \equiv 2 \bmod 3$); for such $n$ one has the exact identity
\[
\frac{\mathfrak{S}(n)}{c_{\min}}
\;=\; \prod_{\substack{\ell \ge 5 \\ \ell \,\mid\, n(n \mp 2)}}
\frac{\ell - 2}{\ell - 3},
\]
the sign being that of the orientation forced by
Proposition~\ref{thm:orient}. When $3 \mid n$ both orientations survive and
$\mathfrak{S}(n)/c_{\min} \ge 2$, so extremal $n$ are never divisible by
$3$. The ten largest record witnesses all sit at a nearly minimal singular
series:

\begin{center}
\begin{tabular}{rccc}
\toprule
$n$ & $n \bmod 3$ & $\mathfrak{S}(n)/c_{\min}$ & $t_{\min}(n)$ \\
\midrule
$14{,}318$          & $2$ & $1.00098$ & $1021$ \\
$15{,}704{,}356$    & $1$ & $1.00000$ & $3299$ \\
$639{,}267{,}856$   & $1$ & $1.00455$ & $5849$ \\
$8{,}126{,}254{,}564$  & $1$ & $1.04728$ & $10{,}271$ \\
$77{,}739{,}196{,}118$ & $2$ & $1.01317$ & $14{,}449$ \\
$571{,}714{,}791{,}706$ & $1$ & $1.00014$ & $14{,}549$ \\
$6{,}527{,}240{,}154{,}856$ & $1$ & $1.00820$ & $20{,}747$ \\
$15{,}149{,}851{,}353{,}116$ & $2$ & $1.01330$ & $21{,}493$ \\
$29{,}330{,}655{,}440{,}536$ & $1$ & $1.01803$ & $21{,}599$ \\
$31{,}819{,}171{,}719{,}758$ & $2$ & $1.01556$ & $23{,}029$ \\
\bottomrule
\end{tabular}
\end{center}

All ten lie within $4.8\%$ of the minimum, whereas over $20{,}000$ random even
$n \in [2\cdot 10^9,\, 2 \cdot 10^{10}]$ the median of
$\mathfrak{S}(n)/c_{\min}$ is $1.61$ and only about $13\%$ lie within $4.8\%$
of it. These constants are computed in \texttt{verify\_heuristics.py}.

The last five rows carry more evidential weight than the others: the model,
with its constant $c_{\min}$ and its extremal mechanism, was committed before
the $10^{12}$ run, and the records found afterwards, one per decade to
$10^{14}$ together with the two intermediate ones above $10^{13}$, all land
within $1.9\%$ of the minimum, out of sample rather than fitted. None of this
proves the model: the $O(1)$ constant is uncalibrated, the prediction is still
exceeded at every decade, and three decades are three data points. The model
was merely at risk of being wrong here, and was not.

\begin{question}\label{q:main}
Is $t_{\min}(n) = O(\log^3 n)$? Is
$t_{\min}(n) = O(\log^{2+\varepsilon} n)$ for every $\varepsilon > 0$?
\end{question}

The heuristic answers both affirmatively, predicting
$\max_{n \le N} t_{\min}(n)/\log^3 N \asymp (\log\log N)^2/\log N \to 0$. The
single violation at $N = 14{,}318$ recorded in Section~\ref{sec:comp} is no
contradiction: under the model that ratio peaks at $\log N = e^2$, with value
$(4/c_{\min})\cdot 4/e^2 = 0.76$, and decays thereafter, whereas the observed
ratio peaks there at $1.17$; the shortfall is the uncalibrated $O(1)$ again.

\section{Computations}\label{sec:comp}

\textbf{Exhaustive verification.} (S) holds for every even
$6 \le n \le 10^{14}$, with no exception, and the maximum of
$t_{\min}(n)$ over the range is $23{,}029$, attained at
$n = 31{,}819{,}171{,}719{,}758$. This run used a C kernel: a
segmented bit-packed odd-only sieve with a shift-AND coverage scan, in
$5{,}000{,}001$ blocks of $2 \times 10^{7}$ integers whose recorded intervals
were checked to tile $[0,\, 10^{14}]$ contiguously, without gap or overlap, on
$16$ threads under OpenMP, $9.0$ hours of wall time on a laptop (i7-1360P).
The kernel caps the twin witnesses tried at $10^{7}$, and an even number
covered by no twin member below the cap is reported as a candidate exception
rather than silently missed; none was reported, so the run proves
$t_{\min}(n) \le 10^{7}$ for every even $n$ in range, with recorded maximum
$23{,}029$, far below the cap. The $10^{13}$ prefix agrees block for block
with our earlier separate run.

The segmented shift-and-test scan is the classical bit-array shift-and-OR
method of Sinisalo \cite{Sinisalo}, Deshouillers, te Riele and Saouter
\cite{DtRS} and Richstein \cite{Richstein}, with odd-only indexing throughout;
our kernels claim no algorithmic novelty. The earlier \texttt{numpy} kernels
remain as the historical double-check: the $10^{11}$ range was covered twice,
by two scans over a shared segmented sieve, with identical extremal statistics
on all $5001$ blocks, and $10^{12}$ by a single S-only pass in $50{,}001$
blocks. Since the sieve is common to both, the agreement tests the scans and
not the sieve.

The genuinely independent checks are two. The record witnesses were
re-verified with deterministic Miller--Rabin tests sharing no code with the
sieves: \texttt{verify\_records.py} confirms the four (S) records at
$10^{11}$, $10^{12}$, $10^{13}$ and $10^{14}$, together with the Dubner record
below, minimality included, by testing every smaller twin member. And the
records to $10^{13}$ were independently confirmed by Tom\'as Oliveira e Silva
(personal communication, August 2026), who verified (S) for all even
$n \le 10^{13}$ with an independent C program using libprimesieve, in about
$13$ hours of a single core, his record list agreeing exactly with ours. The
last decade, to $10^{14}$, rests on a single pass of our own kernel, with the
record witness re-verified as above.

The complete list of record values, that is of the even $n$ at which
$t_{\min}$ attains a value larger than at every smaller even number, is the
following $51$ pairs $(n,\, t_{\min}(n))$, read down the columns.

\begin{center}
\small
\setlength{\tabcolsep}{4pt}
\begin{tabular}{rrrrrr}
\toprule
$n$ & $t_{\min}$ & $n$ & $t_{\min}$ & $n$ & $t_{\min}$ \\
\midrule
$6$ & $3$ & $774{,}968$ & $1669$ & $3{,}760{,}188{,}526$ & $7457$ \\
$12$ & $5$ & $1{,}142{,}224$ & $1871$ & $6{,}082{,}985{,}476$ & $7559$ \\
$30$ & $7$ & $2{,}214{,}848$ & $2269$ & $8{,}126{,}254{,}564$ & $10{,}271$ \\
$98$ & $19$ & $6{,}137{,}438$ & $2689$ & $68{,}607{,}056{,}186$ & $11{,}059$ \\
$220$ & $29$ & $8{,}753{,}956$ & $2999$ & $77{,}739{,}196{,}118$ & $14{,}449$ \\
$308$ & $31$ & $15{,}704{,}356$ & $3299$ & $571{,}714{,}791{,}706$ & $14{,}549$ \\
$556$ & $107$ & $24{,}968{,}056$ & $3359$ & $1{,}821{,}067{,}383{,}866$ & $14{,}629$ \\
$796$ & $137$ & $31{,}480{,}496$ & $3469$ & $1{,}855{,}623{,}735{,}874$ & $15{,}647$ \\
$2776$ & $197$ & $36{,}612{,}068$ & $3919$ & $2{,}435{,}159{,}540{,}516$ & $16{,}189$ \\
$6908$ & $199$ & $59{,}892{,}398$ & $4051$ & $3{,}824{,}295{,}471{,}338$ & $16{,}651$ \\
$8138$ & $271$ & $65{,}712{,}758$ & $4639$ & $4{,}968{,}233{,}746{,}816$ & $17{,}207$ \\
$14{,}318$ & $1021$ & $241{,}609{,}594$ & $5417$ & $5{,}276{,}047{,}389{,}878$ & $17{,}419$ \\
$66{,}818$ & $1231$ & $453{,}267{,}964$ & $5477$ & $6{,}053{,}540{,}148{,}034$ & $18{,}311$ \\
$137{,}528$ & $1291$ & $639{,}267{,}856$ & $5849$ & $6{,}527{,}240{,}154{,}856$ & $20{,}747$ \\
$344{,}018$ & $1321$ & $1{,}200{,}364{,}448$ & $6271$ & $15{,}149{,}851{,}353{,}116$ & $21{,}493$ \\
$479{,}186$ & $1609$ & $1{,}311{,}113{,}198$ & $6451$ & $29{,}330{,}655{,}440{,}536$ & $21{,}599$ \\
$528{,}968$ & $1621$ & $3{,}104{,}283{,}256$ & $7349$ & $31{,}819{,}171{,}719{,}758$ & $23{,}029$ \\
\bottomrule
\end{tabular}
\end{center}

The exhaustive maxima at the decades are $3299$ to $2\times10^7$, $5849$ to
$10^9$, $10{,}271$ to $10^{10}$, $14{,}449$ to $10^{11}$, $14{,}549$ to
$10^{12}$, $20{,}747$ to $10^{13}$ and $23{,}029$ to $10^{14}$, the arguments
being read off the table above. Consistently
with Proposition~\ref{thm:orient}, the last of these has
$n \equiv 2 \pmod 3$ and $23{,}029$ is an upper member, $23{,}027$ being
prime. The record moved by only $+100$ across the decade to $10^{12}$, then by
$+6198$ and $+2282$ across the two decades after it.

The sole violation of $t_{\min}(n) \le \log^3 n$ in the verified range
is $t_{\min}(14{,}318) = 1021 > \log^3(14{,}318) = 876.3$, so that
$\max_{n \le N}t_{\min}(n)$ exceeds $\log^3 N$ exactly for
$14{,}318 \le N \le 23{,}612$. Beyond $10^{11}$ each decade closes the
question for the next, the maximum over $(10^{j},\, 10^{j+1}]$ lying below
$\log^3(10^{j}) \le \log^3 N$: $14{,}549 < 16{,}249 = \log^3(10^{11})$,
$20{,}747 < 21{,}096 = \log^3(10^{12})$ and
$23{,}029 < 26{,}821 = \log^3(10^{13})$, with $\log^3(10^{14}) = 33{,}499$
available for the decade after. At the decade endpoints the ratio of
$\max_{n \le N} t_{\min}(n)$ to $\log^3 N$ reads $0.63$ at $10^6$, $0.84$ at
$10^{10}$, $0.89$ at $10^{11}$, $0.69$ at $10^{12}$, $0.77$ at $10^{13}$ and
$0.69$ at $10^{14}$; the ratio is far from monotone, and what the data support
is that it stays well below $1$ throughout, not that it decreases. Windowed
distributional statistics of $t_{\min}$ over short ranges at each decade are
recorded in the accompanying repository.

\textbf{The Dubner side.} As a by-product the both-twin run, carried out to
$10^{11}$ with the \texttt{numpy} kernel and not repeated for any extension,
re-derives the $33$ terms $\ge 6$ of the Dubner exception list A007534 (all
$\le 4208$) and finds no further both-twin exception up to $10^{11}$,
re-verifying Dubner's conjecture and extending its verified range by a factor
of five: Dubner's own direct search \cite{Dubner} covered the multiples of $6$
up to $2 \times 10^{10}$, where it found the eleven exceptional centres of
Section~\ref{sec:mod3} and no others, which by the triple structure of that
section settles the even numbers to the same height. Dubner's conjecture is
\emph{not} verified above $10^{11}$ by the present computations, which settle
(S) alone there. The largest least \emph{Dubner} witness
(least twin $t$ with $n - t$ also twin) was $414{,}769$, at
$n = 35{,}984{,}652{,}098$; no larger occurs in the remaining two-thirds of
the range.

\begin{sloppypar}
The verification code and the per-block data are archived at
\href{https://doi.org/10.5281/zenodo.21744861}{doi:10.5281/zenodo.21744861}:
the \texttt{numpy} kernels \texttt{verify\_S.py},
\texttt{verify\_S\_segmented.py} and \texttt{verify\_S\_fast2.py} with its
driver \texttt{verify\_S\_seg\_fast.py}, the C kernel, the independent
checker \texttt{verify\_records.py}, and \texttt{verify\_heuristics.py},
which certifies the numerical constants of Section~\ref{sec:heur}.
\end{sloppypar}

\section{Related work and remarks}

The question of representing even numbers as sums of two twin members goes
back at least to Zwillinger \cite{Zw}; Dubner's conjecture and its
verification are discussed in \cite{Dubner, OEIS, Kourbatov}. The earliest
formulation of Conjecture~(S) we are aware of is the 2024 hypothesis of
Sahu \cite{Sahu}; it and the preprint \cite{NA} are discussed in the
introduction.
The unrestricted analogue of $t_{\min}$, the least prime $p$ for which
$n - p$ is also prime, is classical. Granville, van de Lune and te Riele
\cite{GLR} studied it computationally, and the verification of Oliveira e
Silva, Herzog and Pardi \cite{OeS} shows that every even
$n \le 4 \cdot 10^{18}$ has a Goldbach partition whose smaller prime is at
most $9781$, that maximum being attained at
$n = 3{,}325{,}581{,}707{,}333{,}960{,}528$
(OEIS \href{https://oeis.org/A025019}{A025019}, the record values, with their
arguments in \href{https://oeis.org/A025018}{A025018}). Against this, the
twin-restricted least witness already reaches $23{,}029$ by $10^{14}$; the
cost of restricting to $\T$ is quantitatively visible. Closer to
the counts $R_\T$ of Section~\ref{sec:heur} is M.~Barylski's
\href{https://oeis.org/A294185}{A294185} (2018), which counts the distinct
lesser twin primes in the Goldbach partitions of $2n$.

Any proof of (S) inherits the parity obstruction of sieve theory through its
twin-prime consequence, so (S) should be viewed as a unifying target rather
than an attackable intermediate. Theorem~\ref{thm:romanov} locates precisely
what would have to be supplied for the density version: not the distribution
of the twin members, only their number.

\section*{Acknowledgments}

I thank Tom\'as Oliveira e Silva for independently verifying (S) to $10^{13}$
with a program sharing no code with ours, for confirming the record list to
that height, and for his correspondence.

\end{document}